\patchcmd\caption@subtypehook{\let\label\subcaption@label}
{\let\label\subcaption@label\let\ltx@label\subcaption@label}{}{\fail}
\tikzset{dot/.style={circle,fill=black,inner sep=0.5mm},
every label/.style={inner sep=0pt}}
\renewcommand{\p@subfigure}{\thefigure\,}
\newlist{cond}{enumerate}{1}
\setlist[cond]{label=(\roman*)}
\crefname{condi}{condition}{conditions}
\newcommand{\orcid}[1]{\texorpdfstring{\,\orcidlink{#1}}{}}
\let\tqs\textquotesingle
\let\rsqa\rightsquigarrow
\newcommand{\ui}{i^-}
\newcommand{\uj}{j^-}
\newcommand{\calf}{\mathcal{F}}
\newcommand{\mT}{\mathcal{T}}
\DeclareMathOperator{\KG}{KG}
\DeclareMathOperator{\cd}{cd}
\DeclareMathOperator{\conv}{conv}
\DeclareMathOperator{\vol}{vol}
\newcommand{\Assoc}{\mathrm{Assoc}}
\newcommand{\Diag}{\mathrm{Diag}}
\newcommand{\Perm}{\mathrm{Perm}}
\title{A Lovász-Kneser theorem for triangulations}
\author[Anton Molnar]{Anton Molnar\nfts{1}\orcid{0009-0003-2824-8382}}
\author[Cosmin Pohoata]{Cosmin Pohoata\nfts{1}\orcid{0009-0004-4842-2939}}
\author[Michael Zheng]{Michael Zheng\nfts{1}\orcid{0009-0001-2406-6130}}
\address{\nfts{1}Department of Mathematics, Emory University, Atlanta, GA 30322, USA}
\email{\{anton.molnar, cosmin.pohoata, xiangxiang.michael.zheng\}@emory.edu}
\author[Daniel G.\ Zhu]{Daniel G.\ Zhu\nfts{2}\orcid{0000-0001-5675-8353}}
\address{\nfts{2}Department of Mathematics, Princeton University, Princeton, NJ 08544, USA}
\email{zhd@princeton.edu}
\begin{document}
\begin{abstract}
    We show that the Kneser graph of triangulations of a convex $n$-gon has chromatic number $n-2$.
\end{abstract}

\maketitle

\section{Introduction} \label{sec:intro}
For a finite set system $\calf$, define the \vocab{Kneser graph} $\KG(\calf)$ to be the graph whose vertices are the elements of $\calf$ and two vertices $F$ and $F'$ are connected by an edge if $F\cap F' = \emptyset$. In 1955, Kneser \cite{Kneser} considered the chromatic number of $\KG(\binom{[n]}{k})$, where $\binom{[n]}{k}$ is the family of $k$-element subsets of $[n] = \set{1,2,\ldots, n}$. Specifically, Kneser found a simple coloring with $\max(n-2k+2, 1)$ colors, and conjectured that it is optimal. In 1978, Lov\'{a}sz \cite{Lovasz78} proved Kneser's conjecture using the Borsuk-Ulam theorem \cite{BU}, kickstarting the study of topological methods in combinatorics. Although other proofs have since been found, they all have a distinctly topological flavor. To name a few, B\'ar\'any \cite{Barany} deduced the Lov\'asz-Kneser theorem from the Lyusternik-Schnirelmann theorem \cite{LS} (now considered a standard corollary of the Borsuk-Ulam theorem) and Gale's lemma \cite{Gale}. A simplification of this proof removing the need for Gale's lemma was found by Greene \cite{Greene02}. A proof using the topological Radon theorem \cite{Radon} (another corollary of the Borsuk-Ulam theorem) was given by Frick \cite{Frick}, while a purely combinatorial proof based on the Tucker lemma \cite{Tucker} (a combinatorial analogue of the Borsuk-Ulam theorem) was given by Matou\v{s}ek \cite{Matousek}.  

In this paper, we consider the chromatic number of $\KG(\mT_n)$, where $\mT_n$ denotes the set of triangulations of a convex $n$-gon. Here, we represent a triangulation as the set of its diagonals, so that two triangulations are adjacent in the Kneser graph if they share no diagonals. We further represent a diagonal as the set of its endpoints, and identify the vertices of the $n$-gon with the set $[n]$. Thus, every triangulation $T \in \mT_n$ is a subset of the set
\[\Diag_n \coloneq \setmid*{\set{i, j} \in \binom{[n]}{2}}{i-j \not\equiv \pm1 \pmod{n}}.\]
of all diagonals of our convex $n$-gon.

Our main result is the following:
\begin{thm} \label{thm:main}
For every $n \geq 3$,
\[\chi(\KG(\mT_n)) = n-2.\]
\end{thm}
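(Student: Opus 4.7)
The plan is to prove the two bounds separately.

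\emph{Upper bound.} I would exhibit an explicit proper coloring with $n-2$ colors. For each $T \in \mT_n$, let $v_T \in \{2, \ldots, n-1\}$ denote the third vertex of the unique triangle of $T$ containing the boundary edge $\{n, 1\}$, and color $T$ by $c(T) = v_T$. If $c(T_1) = c(T_2) = v$, then both $T_1$ and $T_2$ contain the triangle $\{1, v, n\}$, and hence share the diagonal $\{v, n\}$ (when $v=2$), the diagonal $\{1, v\}$ (when $v = n-1$), or both (when $2 < v < n-1$). Thus $c$ is a proper coloring, giving $\chi(\KG(\mT_n)) \leq n-2$.

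\emph{Lower bound.} The natural topological object here is the \emph{simplicial associahedron} $\Delta_n$, the simplicial complex with vertex set $\Diag_n$ whose simplices are the sets of pairwise non-crossing diagonals; it is a polytopal $(n-4)$-sphere (dual to the Stasheff associahedron) whose facets are exactly $\mT_n$. My plan is to adapt B\'ar\'any's topological proof of the classical Lov\'asz--Kneser theorem. Given a hypothetical proper coloring $c : \mT_n \to [n-3]$, I would embed the $n$ polygon vertices as points $v_1, \ldots, v_n \in S^{n-4}$ in a Gale-like cyclic position, then define closed sets $A_1, \ldots, A_{n-3} \subseteq S^{n-4}$, where $A_i$ is the closure of the set of directions $x$ for which some triangulation compatible with the open hemisphere at $x$ receives color $i$. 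A Gale-style covering property would imply these sets cover $S^{n-4}$, so by Lyusternik--Schnirelmann some $A_i$ contains an antipodal pair $x, -x$, producing two disjoint same-color triangulations and the desired contradiction.

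\emph{Main obstacle.} The serious difficulty is identifying the right Gale-type statement. The naive version---every open hemisphere contains the full vertex set of some triangulation---cannot be achieved for $n \geq 6$, because the triangulatable vertex sets are precisely the complements of independent sets in the cycle $C_n$, and requiring every pair of consecutive polygon vertices to lie in opposite open hemispheres would force them to be strictly antipodal, which is impossible for more than two points. I expect the paper's proof circumvents this either by embedding the diagonals (rather than the polygon vertices) on some sphere, or by replacing the Lyusternik--Schnirelmann step with a finer Tucker- or Ky Fan-style combinatorial lemma attuned to the global non-crossing structure encoded by $\Delta_n$.
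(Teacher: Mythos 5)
Your upper bound is correct and matches one of the paper's arguments (coloring $T$ by the third vertex of the triangle on the boundary edge $\{n,1\}$).

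Your lower bound is the right general strategy and your diagnosis of the obstacle is perceptive, but there is a genuine gap: you identify that the naive Gale embedding of polygon vertices fails and guess that one should ``embed the diagonals instead,'' but you do not say how, and the construction is the entire content of the proof. The paper's resolution is to realize the associahedron as the \emph{secondary polytope} $\Sigma_Q$ of a convex $n$-gon $Q$, which gives it an explicit geometric realization in an $(n-3)$-dimensional space $X_Q$ with a clean description of the outward facet normals $f_d^\perp$ (classes of functions $\min(\ell,0)$ where $\ell$ is an affine function vanishing on the diagonal $d$). One then assigns to each diagonal $d = p_1p_2$ a vector $x_d \in X_Q$ with positive coefficients on $p_1,p_2$ and negative coefficients on all other polygon vertices, and verifies the pairing inequality $\langle f_d^\perp, x_{d'} \rangle > 0$ whenever $d$ and $d'$ do not cross. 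The Gale-type covering property you want is then a consequence of the Farkas lemma applied to $\Sigma_Q$: for any direction $w \in X_Q^*$, a vertex $v_T$ of $\Sigma_Q$ maximizing $\langle w, \cdot \rangle$ has $w$ as a conic combination of the $f_d^\perp$ for $d \in T$, which by the pairing inequality forces $\langle w, x_d \rangle > 0$ for every $d \in T$. After normalizing the $x_d$ to the unit sphere, this is exactly the statement that every open hemisphere ``sees'' a full triangulation, and Lyusternik--Schnirelmann finishes as you describe. (The paper also gives an alternative construction via a carefully chosen bilinear form on $X_Q^*$ under which any two facets of $\Sigma_Q$ sharing a vertex meet at an obtuse angle; your argument would go through with either.) So the skeleton of your plan is sound, but without the secondary polytope realization and the Farkas-lemma mechanism there is no way to produce the required points on $S^{n-4}$ or to verify the hemisphere covering property, and your proof does not close.
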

In other words, one cannot partition $\mT_n$ into fewer than $n-2$ intersecting families in which any two triangulations share a diagonal.

It is straightforward to show that $\chi(\KG(\mT_n)) \leq n-2$. Indeed, we may color a triangulation $T$ with the unique color $i \in [n-2]$ such that $T$ contains the triangle $\set{i, n-1, n}$. Alternatively for $n \geq 4$, we may $T \in \mT_n$ with any $i \in [n-2]$ such that $T$ contains the triangle $\set{i,i+1,i+2}$. This is well-defined since any triangulation $T$ consists of $n-2$ triangles, so since there are $n$ sides of the polygon there must be at least two triangles of the form $\set{i,i+1,i+2}$ for $i \in [n]$ (addition is taken modulo $n$). Moreover, since triangles $\set{n-1,n,1}$ and $\set{n,1,2}$ intersect and thus cannot both be in $T$, we can choose some $i \in [n-2]$.

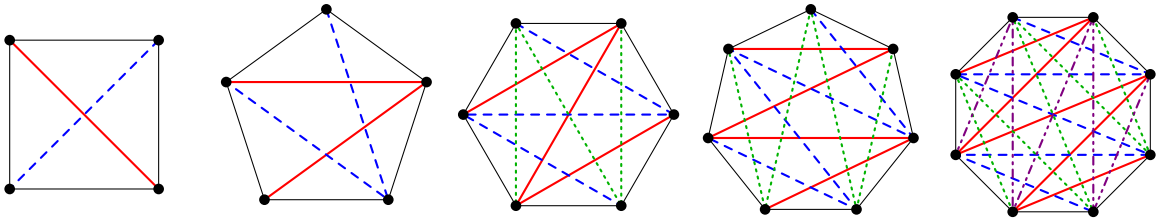
\begin{figure}[t] 
\centering
\begin{tikzpicture}[scale=1.4,s0/.style={thick,line cap=round,line join=round},s1/.style={s0,red},s2/.style={s0,blue,dashed},s3/.style={s0,green!70!black,dotted},s4/.style={s0,violet,dash dot dot}]
\coordinate (a1) at (135:1);
\coordinate (a2) at (45:1);
\coordinate (a3) at (-45:1);
\coordinate (a4) at (-135:1);
\draw[s1] (a1)--(a3);
\draw[s2] (a2)--(a4);
\draw (a1) node[dot]{} --(a2) node[dot]{} --(a3) node[dot]{} --(a4) node[dot]{} --cycle;
\begin{scope}[xshift=2.3cm]
\coordinate (b1) at (162:1);
\coordinate (b2) at (90:1);
\coordinate (b3) at (18:1);
\coordinate (b4) at (-54:1);
\coordinate (b5) at (-126:1);
\draw[s1] (b1)--(b3)--(b5);
\draw[s2] (b2)--(b4)--(b1);
\draw (b1) node[dot]{} --(b2) node[dot]{} --(b3) node[dot]{} --(b4) node[dot]{} --(b5) node[dot]{} --cycle;
\end{scope}
\begin{scope}[xshift=4.6cm]
\coordinate (c1) at (180:1);
\coordinate (c2) at (120:1);
\coordinate (c3) at (60:1);
\coordinate (c4) at (0:1);
\coordinate (c5) at (-60:1);
\coordinate (c6) at (-120:1);
\draw[s1] (c1)--(c3)--(c6)--(c4);
\draw[s2] (c2)--(c4)--(c1)--(c5);
\draw[s3] (c3)--(c5)--(c2)--(c6);
\draw (c1) node[dot]{} --(c2) node[dot]{} --(c3) node[dot]{}
--(c4) node[dot]{} --(c5) node[dot]{} --(c6) node[dot]{} --cycle;
\end{scope}
\begin{scope}[xshift=6.9cm]
\coordinate (d1) at (990/7:1);
\coordinate (d2) at (90:1);
\coordinate (d3) at (270/7:1);
\coordinate (d4) at (-90/7:1);
\coordinate (d5) at (-450/7:1);
\coordinate (d6) at (-810/7:1);
\coordinate (d7) at (-1170/7:1);
\draw[s1] (d1)--(d3)--(d7)--(d4)--(d6);
\draw[s2] (d2)--(d4)--(d1)--(d5)--(d7);
\draw[s3] (d3)--(d5)--(d2)--(d6)--(d1);
\draw (d1) node[dot]{} --(d2) node[dot]{} --(d3) node[dot]{} --(d4) node[dot]{}
--(d5) node[dot]{} --(d6) node[dot]{} --(d7) node[dot]{} --cycle;
\end{scope}
\begin{scope}[xshift=9.2cm]
\coordinate (e1) at (157.5:1);
\coordinate (e2) at (112.5:1);
\coordinate (e3) at (67.5:1);
\coordinate (e4) at (22.5:1);
\coordinate (e5) at (-22.5:1);
\coordinate (e6) at (-67.5:1);
\coordinate (e7) at (-112.5:1);
\coordinate (e8) at (-157.5:1);
\draw[s1] (e1)--(e3)--(e8)--(e4)--(e7)--(e5);
\draw[s2] (e2)--(e4)--(e1)--(e5)--(e8)--(e6);
\draw[s3] (e3)--(e5)--(e2)--(e6)--(e1)--(e7);
\draw[s4] (e4)--(e6)--(e3)--(e7)--(e2)--(e8);
\draw (e1) node[dot]{} --(e2) node[dot]{} --(e3) node[dot]{} --(e4) node[dot]{}
--(e5) node[dot]{} --(e6) node[dot]{} --(e7) node[dot]{} --(e8) node[dot]{} --cycle;
\end{scope}
\end{tikzpicture}
\caption{$\floor{n/2}$ pairwise disjoint zig-zag triangulations of an $n$-gon for $4 \leq n \leq 8$.} \label{fig:zig-zag}
\end{figure}

On the other hand, it is also not difficult to see that $\KG(\mT_n)$ must always contain a clique of size $\floor{n/2}$, which implies that $\chi(\KG(\mT_n)) \geq \floor{n/2}$. Such a clique can be obtained by taking $\floor{n/2}$ ``zig-zag'' triangulations of the form $\set{i,i+2}, \set{i+2,i-1}, \set{i-1,i+3}, \set{i+3,i-2}, \ldots$ for $i \in [\floor{n/2}]$, as shown in \cref{fig:zig-zag}. Note that for $n$ even, this construction is essentially Walecki's Hamiltonian decomposition of $K_{n+1}$ (see e.g.\ \cite{MR2394738}). In fact, $\floor{n/2}$ is the clique number of $\KG(\mT_n)$, as each triangulation in $\mT_n$ consists of $n-3$ diagonals and $\abs{\Diag_n} = n(n-3)/2$.

The main difficulty behind \cref{thm:main} thus lies in establishing that $\chi(\KG(\mT_n)) \geq n-2$ for all $n \geq 3$. While there exist results bounding $\chi(\KG(\calf))$ for arbitrary set systems $\calf$, applying these results does not work for $\mT_n$ (see \cref{sec:motiv}), meaning that we have to use the structure of $\mT_n$ in a deeper way.

\begin{figure}[t]
\centering
\includegraphics[height = 8cm]{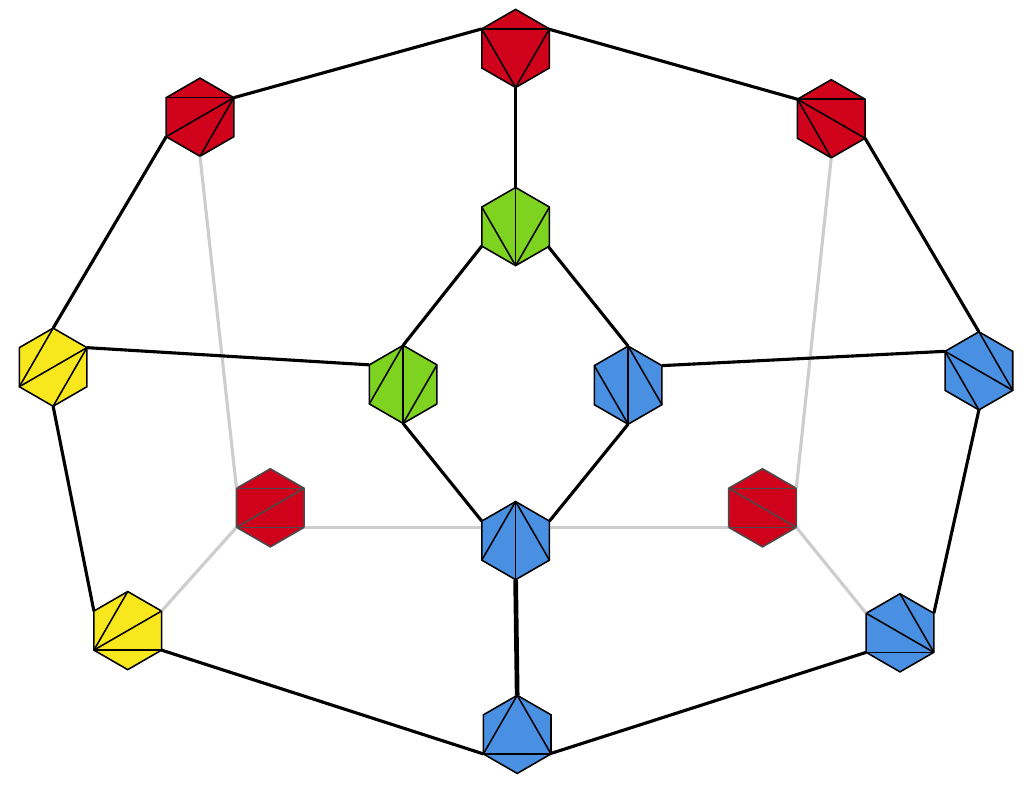}
\caption{The associahedron $\Assoc_3$ and a proper $4$-coloring of $\KG(\mT_6)$.} \label{fig:assoc}
\end{figure}
To do so, we work with the associahedron $\Assoc_{n-3}$, an $(n-3)$-dimensional combinatorial polytope discovered by Tamari \cite{tamari} and Stasheff \cite{MR158400} whose $k$-dimensional faces correspond to collections of $n-3-k$ noncrossing diagonals of a convex $n$-gon, with inclusion reversing under this correspondence. In particular, triangulations correspond to vertices of $\Assoc_{n-3}$, diagonals correspond to facets of $\Assoc_{n-3}$, and a triangulation contains a diagonal if and only if the respective vertex is contained in the respective facet. A crucial consequence is that the associahedron encodes all the data needed to reconstruct $\KG(\mT_n)$. Specifically, if we define the Kneser graph $\KG(P)$ of a combinatorial polytope $P$ to be the graph with vertices given by the vertices of $P$, with two vertices adjacent in $\KG(P)$ if they do not share a facet in $P$, then we have $\KG(\Assoc_{n-3}) = \KG(\mT_n)$. The case $n=6$ is illustrated in \cref{fig:assoc}.

After this reformulation, we derive \cref{thm:main} from general results showing that for $d$-dimensional polytopes $P$ satisfying certain conditions, we have $\chi(\KG(P)) \geq d+1$. The proofs of these results use topology in the form of the Lyusternik-Schnirelmann theorem, and are quite similar in spirit to B\'ar\'any's proof \cite{Barany} of the original Lov\'asz-Kneser theorem.

\subsection*{Stability results} In 1978, Schrijver \cite{Schrijver} strengthened the Lov\'asz-Kneser theorem by finding an induced subgraph of $\KG(\binom{[n]}{k})$ that is \vocab{color-critical}, i.e.\ having the same chromatic number as $\KG(\binom{[n]}{k})$ and the property that deleting any vertex would decrease the chromatic number. Analogously, we find a small induced subgraph of $\KG(\mT_n)$ with the same chromatic number.
\begin{thm} \label{thm:stab}
Let $F_k$ be the $k$th Fibonacci number (indexed so that $F_1 = F_2 = 1$). Then for each $n \geq 3$ there exists a subset $\mT^{(3)}_n \subseteq \mT_n$ (to be defined in \cref{sec:stab}) such that $\abs{\mT^{(3)}_n} = F_{2n-5}$ and 
\[\chi(\KG(\mT^{(3)}_n)) = n-2.\]
\end{thm}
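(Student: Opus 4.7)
\textbf{Proof plan for Theorem~\ref{thm:stab}.} The upper bound $\chi(\KG(\mT_n^{(3)})) \le n-2$ is immediate from Theorem~\ref{thm:main}: since $\mT_n^{(3)} \subseteq \mT_n$, the graph $\KG(\mT_n^{(3)})$ is an induced subgraph of $\KG(\mT_n)$, and any of the explicit $(n-2)$-colorings of $\KG(\mT_n)$ described in the Introduction restricts to one of $\KG(\mT_n^{(3)})$. Hence all the content lies in (a) the cardinality $\abs{\mT_n^{(3)}} = F_{2n-5}$ and (b) the matching lower bound on the chromatic number.

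For the cardinality, the form $F_{2n-5}$ suggests a recursive description of $\mT_n^{(3)}$ reflecting a two-step Fibonacci pattern; indeed, since $F_{2n-5} = 3F_{2n-7} - F_{2n-9}$, writing $a_n = \abs{\mT_n^{(3)}}$ one could aim to verify $a_n = 3a_{n-1} - a_{n-2}$ with initial values $a_3 = 1$ and $a_4 = 2$. I would attempt this by decomposing $T \in \mT_n^{(3)}$ along a distinguished boundary edge according to which triangle of $T$ is incident to that edge, with the restriction defining $\mT_n^{(3)}$ cutting the admissible possibilities down in a way that lines up with the recurrence. A parallel, probably cleaner route would be a direct bijection between $\mT_n^{(3)}$ and a Fibonacci-enumerated family, such as monomer-dimer tilings of a $1 \times (2n-6)$ strip, or independent sets in a path on $2n-7$ vertices; producing such a bijection by reading off a canonical encoding of each triangulation would give both the count and a useful combinatorial handle on $\mT_n^{(3)}$.

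For the lower bound $\chi(\KG(\mT_n^{(3)})) \ge n-2$, the plan is to rerun the topological argument underlying Theorem~\ref{thm:main}, restricted to $\mT_n^{(3)}$. That argument realises a proper coloring of $\KG(\Assoc_{n-3})$ with $c$ colors as a closed cover of a suitable $(n-3)$-sphere (coming from a Gale-type construction associated with the associahedron), one set per color class; the Lyusternik-Schnirelmann theorem then forces $c \ge n-2$. To port this to $\mT_n^{(3)}$, one replays the same construction using only the regions coming from vertices in $\mT_n^{(3)}$, and the crucial point is to check that these regions still cover the sphere. Combinatorially, this should reduce to the extension property that every sufficiently small collection of pairwise noncrossing diagonals of the convex $n$-gon can be completed to a triangulation belonging to $\mT_n^{(3)}$.

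The main obstacle is exactly this extension property. It plays the role of Schrijver's lemma that every sparse subset of $[n]$ of size less than $k$ can be extended to a stable $k$-subset, and I expect it to be the combinatorial heart of the stability strengthening. Verifying it should come down to a case analysis on the shape of a noncrossing diagonal collection, showing that the restriction cutting $\mT_n^{(3)}$ out of $\mT_n$ is local enough to always permit a valid completion; ideally the same recursive decomposition used for (a) will make both the Fibonacci count and the extension property drop out in tandem.
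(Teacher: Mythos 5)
Your upper bound and the general framing are fine, and your observation that $a_n = \abs{\mT_n^{(3)}}$ ought to satisfy the Fibonacci-type recurrence $a_n = 3a_{n-1} - a_{n-2}$ is a correct identification (the paper establishes the count via the generating function $f_3(t) = (t-2t^2)/(1-3t+t^2)$, which encodes exactly this recurrence). But the lower-bound plan has a genuine gap: the ``extension property'' you propose as the combinatorial bottleneck is not the right condition, and proving it would not close the argument.

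Here is the problem. In the covering argument, for a given direction $w$ the only triangulation you know anything about is the maximizer $T$ of $\gen{w,\cdot}$ over $V(\Sigma_Q)$: the Farkas lemma expresses $w$ as a conic combination of $f_d^\perp$ for $d \in T$, and from there $\gen{w, x_{d'}} > 0$ follows for $d' \in T$ because $d$ and $d'$ are noncrossing. To replace $T$ by some $T' \in \mT_n^{(3)}$, you need $\gen{f_d^\perp, x_{d'}} > 0$ for \emph{all} $d \in T$ and $d' \in T'$ — and there is no reason for $T$ and $T'$ to be noncrossing. Your extension property (completing a small noncrossing family to a member of $\mT_n^{(3)}$) never produces such a $T'$: the maximizer $T$ can be an arbitrary triangulation outside $\mT_n^{(3)}$, and you have no control over which noncrossing subfamily of $T$ you could hope to extend, nor any guarantee that the extension's new diagonals behave well against all of $T$. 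What the paper actually does is define a relation $\rsqa$ on diagonals that \emph{weakens} noncrossing (allowing certain crossings), shows via a carefully chosen polygon — vertices on a parabola with lacunarily spaced $x$-coordinates — that one can pick $x_d$ so that $\gen{f_d^\perp, x_{d'}} > 0$ whenever $d \rsqa d'$ (Lemma \ref{lem:lac}), and then proves the purely combinatorial statement that every $T \in \mT_n$ admits some $T' \in \mT_n^{(3)}$ with $T \rsqa T'$ (Lemma \ref{lem:s3}). The crossing-tolerant inequality is the new geometric ingredient not present in the proof of Theorem \ref{thm:main}, and your plan is missing it; the Schrijver analogy (extend a sparse set to a stable set) does not transfer because here the substitute $T'$ is not a superset or refinement of any part of $T$ but a genuinely different triangulation that crosses $T$.
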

It is well-known that $\abs{\mT_n} = \Theta(4^n/n^{3/2})$ and $F_{2n-5} = \Theta((\frac{3+\sqrt{5}}{2})^n) = \Theta((2.618\ldots)^n)$, so $\mT^{(3)}_n$ is an exponentially small portion of $\mT_n$. However, in contrast to the work of Schrijver, these graphs are in general not color-critical and computational evidence suggests that it may still be possible to delete substantially more vertices while preserving the chromatic number.

In addition, we show that a crucial role in $\KG(\mT_n)$ is played by the \vocab{star triangulations}, which consist of the $n-3$ diagonals emanating from a given vertex. Specifically, the star triangulations are exactly those whose deletion affects the chromatic number.
\begin{thm} \label{thm:star}
If $n \geq 3$ and $T \in \mT_n$, we have $\chi(\KG(\mT_n \setminus \set{T})) = n-2$ if and only if $T$ is not a star triangulation.
\end{thm}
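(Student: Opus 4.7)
The plan is to prove the two directions of the biconditional separately; by the dihedral symmetry of $\mT_n$ we may throughout assume that any star triangulation under consideration is $T^\star = \set{\set{1, k} : 3 \leq k \leq n-1}$, the star from vertex $1$.

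For the forward direction (star triangulation is deletable), the plan is to exhibit a proper $(n-3)$-coloring of $\KG(\mT_n \setminus \set{T^\star})$. Using the $n-3$ short diagonals $d_i \coloneq \set{i, i+2}$ for $i = 2, \ldots, n-2$, assign each $T \neq T^\star$ the color $c(T) = \min \setmid{i \in \set{2, \ldots, n-2}}{d_i \in T}$. Two triangulations of the same color share $d_i$, so $c$ is a proper coloring. The nontrivial content is that $c$ is everywhere defined: every $T \neq T^\star$ contains at least one $d_i$. To see this, note that an ear at vertex $v$ of $T$ requires the diagonal $\set{v-1, v+1}$, so if $T$ avoids all the $d_i$ then $T$ has no ear at any vertex in $\set{3, \ldots, n-1}$. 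Since every triangulation has at least two ears, these lie in $\set{1, 2, n}$; a crossing check rules out the pairs $(1, 2)$ and $(1, n)$ (whose corresponding ear-diagonals cross), leaving ears at $2$ and $n$ and forcing $\set{1, 3}, \set{1, n-1} \in T$. Peeling off these two ears reduces the question to the sub-polygon $\set{1, 3, 4, \ldots, n-1}$, whose interior short diagonals coincide with a subset of our forbidden list, so iterating (or formally inducting on $n$) forces $T = T^\star$, a contradiction.

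For the reverse direction (non-star triangulation is non-deletable), the plan is to revisit the general lower bound $\chi(\KG(P)) \geq d + 1$ established earlier in the paper for $d$-polytopes $P$ (and applied to $P = \Assoc_{n-3}$ to prove \cref{thm:main}) and verify that its hypotheses persist after the removal of a single non-star vertex. In the Lyusternik-Schnirelmann-style argument each vertex of $\Assoc_{n-3}$ should correspond to a region on $S^{n-3}$, and star triangulations should be precisely the ``extremal'' vertices whose regions are essential to covering the sphere; deletion of a non-star vertex then leaves the covering (and hence the lower bound) intact. A complementary route is a direct contradiction: from a hypothetical proper $(n-3)$-coloring of $\mT_n \setminus \set{T}$ with color classes $C_1, \ldots, C_{n-3}$, exploit the non-star structure of $T$ to identify a class $C_j$ all of whose elements share a diagonal with $T$, thereby extending the coloring to $\mT_n$ and contradicting \cref{thm:main}.

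The principal obstacle lies in the reverse direction. The forward direction is essentially self-contained, reducing to the inductive ear analysis outlined above. The reverse direction, by contrast, requires pinpointing the precise combinatorial or topological property of $\Assoc_{n-3}$ that fails upon the removal of a star vertex but survives the removal of any non-star vertex, and then matching this property exactly to the hypothesis underlying the general $\KG(P)$ lower bound proved in the paper.
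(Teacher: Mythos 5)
Your forward direction (a star can be removed) is essentially the same argument as the paper's \cref{lem:starconstruct}: color by short diagonals $\set{i,i+2}$ and observe that the only triangulation avoiding all of them is the star itself. The paper phrases this as a triangle-count plus an induction showing $\set{i,n} \in T'$ for all $i$; your ear-peeling induction is equivalent, and it does go through once one relabels the peeled polygon $\set{1,3,4,\ldots,n-1}$ so that the inherited forbidden set $\set{\set{3,5},\ldots,\set{n-3,n-1}}$ is exactly the corresponding forbidden set for the $(n-2)$-gon. So this half is fine.

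The reverse direction, however, is where the entire difficulty of the theorem lives, and your proposal does not contain a proof of it — you correctly identify the high-level goal (``show that the topological lower bound survives deletion of one non-star vertex'') but offer no mechanism for achieving it, and you say as much in your final paragraph. Concretely, the gap is this: to run the Lyusternik--Schnirelmann / Gale argument while avoiding a specified vertex $v_T$ of the associahedron, one needs that for every direction $w \in X_Q^*$ there is some triangulation $T' \neq T$ with $\gen{w, x_d} > 0$ for all $d \in T'$, even when $w$ is a conic combination of the facet normals at $v_T$. Making this work requires (a) defining a combinatorial ``swapping'' relation $\rsqa$ on diagonals that records when one can trade a diagonal $d$ of $T$ for a crossing diagonal $d'$ while keeping the positivity $\gen{f_d^\perp, x_{d'}} > 0$; (b) showing that a suitable convex $n$-gon $Q$ can actually be chosen so that all these positivities hold simultaneously — the paper does this via a degenerate ``lacunary limit'' placement of vertices on a parabola, and it is far from automatic (the regular $n$-gon, for instance, does \emph{not} work); (c) a combinatorial lemma (the paper's \cref{lem:swapmost}) proving that every non-star triangulation, up to rotation/reflection, admits a nontrivial $\rsqa$-move, with two exceptional cases $\Delta, \Delta'$ in $\mT_6$; and (d) a separate argument for those two exceptions (the paper's \cref{prop:delta}). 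Your proposal contains none of (a)--(d), and these are the actual substance of the theorem; without them, ``verify that the hypotheses persist after removal'' is a restatement of the problem rather than a step toward solving it. Your ``complementary route'' — extend a hypothetical $(n-3)$-coloring of $\mT_n \setminus \set{T}$ to $\mT_n$ — also cannot work as stated: when $T$ is a non-star, there is no reason that any color class consists entirely of triangulations meeting $T$, and indeed \cref{thm:main} plus \cref{thm:star} together imply that no such extension exists.
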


\subsection*{Outline}
We prove \cref{thm:main} in \cref{sec:mainproof} and \cref{thm:stab,thm:star} in \cref{sec:stab}. We discuss the possibility of higher-dimensional generalizations in \cref{sec:hidim} and finish with some concluding remarks in \cref{sec:final}.

\section{Proof of \texorpdfstring{\cref{thm:main}}{Theorem \ref{thm:main}}} \label{sec:mainproof}
\subsection{How not to prove \texorpdfstring{\cref{thm:main}}{Theorem \ref{thm:main}}} \label{sec:motiv}
There are a number of generalizations of the Lov\'asz-Kneser theorem that apply to arbitrary set systems. Most notably, in 1988 Dol\tqs nikov \cite{Dolnikov} showed a result which is nowadays thought of as a generalization of Greene's proof of the Lov\'asz-Kneser theorem, though this was not Dol\tqs nikov's original proof. To state it, say a set system $\calf \subseteq 2^S$ is \vocab{$2$-colorable} if there exists a $2$-coloring of $S$ such that no element of $\calf$ is monochromatic. The \vocab{$2$-colorability defect} $\cd_2(\calf)$ of $\calf$ is the minimum size of a set $S'$ such that the elements of $\calf$ disjoint from $S'$ form a $2$-colorable family. Dol\tqs nikov's generalization of the Lov\'asz-Kneser theorem can be now stated as follows:
\begin{thm}[Dol\tqs nikov] \label{Dol}
For any finite set system $\calf$, 
\[
\chi(\KG(\calf)) \geq \cd_2(\calf).
\]
\end{thm}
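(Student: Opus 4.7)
The plan is to mimic Greene's topological proof of the Lov\'asz-Kneser theorem, adapting the hemisphere argument so that the $2$-colorability defect plays the role that $n - 2k + 1$ did there. Let $S$ denote the ground set of $\calf$, set $c = \chi(\KG(\calf))$ and $d = \cd_2(\calf) - 1$, and suppose for contradiction that $c \leq d$. Fix a proper coloring of $\KG(\calf)$ with color classes $\calf_1, \ldots, \calf_c$, and embed $S$ in general position on $S^d \subseteq \mathbb{R}^{d+1}$, meaning no $d+1$ of its points lie on a common hyperplane through the origin.

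For each $x \in S^d$, let $H(x) = \{y \in \mathbb{R}^{d+1} : \langle x, y \rangle > 0\}$ be the open hemisphere centered at $x$, and define
\[A_i = \{x \in S^d : F \subseteq H(x) \text{ for some } F \in \calf_i\} \quad \text{for } i = 1, \ldots, c,\]
each of which is open. Let $A_0 = S^d \setminus (A_1 \cup \cdots \cup A_c)$, which is closed. These $c+1 \leq d+1$ sets cover $S^d$, so the Lyusternik-Schnirelmann theorem (padding with empty sets if $c+1 < d+1$) produces an index $i \in \{0, 1, \ldots, c\}$ and a point $x \in S^d$ with $\pm x \in A_i$.

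If $i \geq 1$, there exist $F, F' \in \calf_i$ with $F \subseteq H(x)$ and $F' \subseteq H(-x)$; since $H(x) \cap H(-x) = \emptyset$, also $F \cap F' = \emptyset$, so $F$ and $F'$ are adjacent in $\KG(\calf)$, contradicting their shared color. Otherwise $i = 0$, so neither hemisphere contains any member of $\calf$; let $S' = S \cap \{y : \langle x, y \rangle = 0\}$ be the elements of $S$ lying on the equator. Any $F \in \calf$ disjoint from $S'$ lies in $H(x) \cup H(-x)$ but not in either hemisphere individually, so labelling each point of $S \setminus S'$ by its hemisphere is a valid $2$-coloring of the members of $\calf$ disjoint from $S'$, whence $\cd_2(\calf) \leq |S'|$. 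But general position caps $|S'| \leq d = \cd_2(\calf) - 1$, the desired contradiction.

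The main obstacle is picking the dimension of the sphere to balance these two sources of contradiction: one needs $d + 1 = \cd_2(\calf)$ exactly so that Lyusternik-Schnirelmann applies to the $c+1 \leq d+1$ sets $A_0, A_1, \ldots, A_c$, and simultaneously so that general position forces $|S'| < \cd_2(\calf)$ on a hyperplane through the origin in $\mathbb{R}^{d+1}$. Once this dimension count is in place, the rest is a direct adaptation of Greene's hemisphere combinatorics to an arbitrary set system.
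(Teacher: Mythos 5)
Your proof is correct, and it is essentially the standard Greene/Matoušek argument for Dol\tqs nikov's theorem. Note that the paper does not actually prove this statement itself---it cites Matoušek's book \cite[Thm.~3.4.1]{MR1988723} and only remarks that the proof assigns ground-set elements to points in general position---so there is no in-text proof to compare against, but yours matches the referenced one and is also the same Bárány/Lyusternik--Schnirelmann template the paper uses in \cref{lem:polybu}. One small caveat if you were to splice this into the paper: you invoke a version of Lyusternik--Schnirelmann in which $c$ open sets plus one closed set cover $S^d$, whereas the paper's \cref{thm:ls} is stated only for open covers. The mixed open/closed variant you need is standard and equivalent (it is exactly what Greene's proof uses), but it is not literally the statement given in the paper, so you would need to cite or prove that strengthening. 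You may also wish to say explicitly that $\emptyset \notin \calf$ so that in the case $i \geq 1$ the two sets $F \subseteq H(x)$ and $F' \subseteq H(-x)$ are genuinely distinct vertices of the Kneser graph.
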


For example, when $n \geq 2k$ are positive integers, it is not difficult to check that $\cd_2(\binom{[n]}{k}) = n-2k+2$, so this result generalizes the Lov\'asz-Kneser theorem.

\begin{figure}[t] 
\centering
\begin{tikzpicture}[scale=1.6,s1/.style={red,very thick},s2/.style={blue},every label/.append style={label distance=1pt}]
\coordinate (c1) at (180:1);
\coordinate (c2) at (120:1);
\coordinate (c3) at (60:1);
\coordinate (c4) at (0:1);
\coordinate (c5) at (-60:1);
\coordinate (c6) at (-120:1);
\draw[s2] (c3)--(c6)--(c2)--(c5)--(c1)--(c4);
\draw[s1] (c2)--(c4)--(c6) (c1)--(c3)--(c5);
\draw (c1) node[dot,label={180:$1$}]{} --(c2) node[dot,label={120:$2$}]{} --(c3) node[dot,label={60:$3$}]{}
--(c4) node[dot,label={0:$4$}]{} --(c5) node[dot,label={-60:$5$}]{} --(c6) node[dot,label={-120:$6$}]{} --cycle;
\end{tikzpicture}
\caption{A $2$-coloring of $\Diag_6$ demonstrating that $\cd_2(\mT_6) = 0$.} \label{fig:cd2-n=6}
\end{figure}
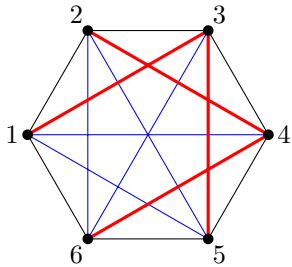

However, \cref{Dol} cannot be used to obtain \emph{any} interesting lower bound for $\chi(\KG(\mT_n))$, as $\cd_2(\mT_n) = 0$ for $n \geq 6$. Indeed, if diagonals of the form $\set{i,i+2}$ for $i \in [n-2]$ are colored red and all other diagonals are colored blue, then no triangulation is monochromatic. By the same argument as in the introduction, every triangulation contains a red diagonal. Moreover, every triangle containing the edge $\set{n,1}$ contains a blue diagonal as a side, so every triangulation must contain a blue diagonal as well. This construction is illustrated in \cref{fig:cd2-n=6}.

The failure of \cref{Dol} to yield useful results can perhaps be explained by its proofs (e.g., see \cite[Thm.~3.4.1]{MR1988723}), where elements of the ground set $S$ are arbitrarily assigned points in general position. This is reasonable in the case where $\calf = \binom{[n]}{k}$, as the ground set $[n]$ has no inherent structure. However, in our case, $S$ is $\Diag_n$, and there are important relations between the elements of $\Diag_n$ which \cref{Dol} throws away entirely. This motivates the use of the associahedron, which provides a geometric structure which allows us to choose points more carefully.

\subsection{The Kneser graph of a polytope}
\label{subsec:KG(poly)}
In this section, we will generalize to the case of an arbitrary convex polytope and find various conditions that imply bounds on the chromatic number of its Kneser graph.

Let $P$ be a polytope of full dimension in a real vector space $X$ of dimension $d$. Let $X^*$ denote the dual space of $X$. (To avoid later confusion, we will avoid identifying $X$ with $\setr^d$.) Let $V(P)$ and $F(P)$ denote the vertices and facets of $P$, respectively, and for $f \in F(P)$ let $f^\perp$ denote an outward normal. Finally, we recall the definition of $\KG(P)$ as a graph whose vertices are given by $V(P)$ and where two vertices are connected if and only if no facet contains both.

The following result is our main tool for bounding the chromatic number; the implication $\text{\ref{item:gale}} \implies \text{\ref{item:chi}}$ can be regarded as a generalization of B\'ar\'any's proof \cite{Barany} of the Lovasz-Kneser theorem.
\begin{lem} \label{lem:polybu}
Consider the following conditions:
\begin{cond}
\item \label{item:obtuse} There exists a bilinear form $B$ on $X^*$ such that $B(f_1^\perp, f_2^\perp) > 0$ for any $f_1,f_2\in F(P)$ that share a vertex.
\item \label{item:vectors} There exists an assignment of a vector $x_f \in X$ to every $f \in F(P)$ such that if $f_1$ and $f_2$ share a vertex, $\gen{f_1^\perp, x_{f_2}} > 0$.
\item \label{item:gale} There exists an assignment of a point $u_f \in S^{d-1}$ to every $f \in F(P)$ such that for each open hemisphere $H \subseteq S^{d-1}$ there exists a vertex $v \in V(P)$ such that $u_f \in H$ for every $f \in F(P)$ containing $v$.
\item \label{item:chi} $\chi(\KG(P)) \geq d+1$.
\end{cond}
Then we have $\text{\ref{item:obtuse}} \implies \text{\ref{item:vectors}} \implies \text{\ref{item:gale}} \implies \text{\ref{item:chi}}$.
\end{lem}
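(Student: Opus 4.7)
The lemma consists of three implications, which I would establish in turn, building up to the topological core.

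The step \ref{item:obtuse} $\implies$ \ref{item:vectors} is a routine identification via duality: a bilinear form $B$ on $X^*$ induces a canonical linear map $X^* \to X^{**} = X$, and I would define $x_f \in X$ to be the image of $f^\perp$ under this map, so that $\gen{f_1^\perp, x_{f_2}}$ equals $B(f_1^\perp, f_2^\perp)$ (up to swapping arguments, which is harmless because the ``share a vertex'' relation is symmetric). The positivity in \ref{item:vectors} is then immediate from \ref{item:obtuse}.

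For \ref{item:vectors} $\implies$ \ref{item:gale}, I would fix an inner product on $X$ and set $u_f \coloneq x_f/\|x_f\|$, which is well defined since specializing $f_1 = f_2 = f$ in \ref{item:vectors} forces $x_f \neq 0$. Open hemispheres of $S^{d-1} \subseteq X$ correspond bijectively to nonzero $\phi \in X^*$ via $H_\phi = \setmid*{u \in S^{d-1}}{\phi(u) > 0}$, so the claim reduces to producing, for each nonzero $\phi$, a vertex $v \in V(P)$ with $\phi(x_f) > 0$ for every $f \ni v$. The normal fan of $P$ supplies such a vertex: any nonzero $\phi \in X^*$ lies in the normal cone of some $v \in V(P)$, i.e.\ $\phi = \sum_{f \ni v} \lambda_f f^\perp$ with $\lambda_f \geq 0$ and not all zero. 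For any $f' \ni v$, the expansion
\[\phi(x_{f'}) = \sum_{f \ni v} \lambda_f \gen{f^\perp, x_{f'}}\]
is a nonnegative combination of strictly positive numbers (positivity by \ref{item:vectors}, since $f,f' \ni v$ share a vertex) with at least one nonzero coefficient, hence strictly positive.

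The implication \ref{item:gale} $\implies$ \ref{item:chi} is the topological heart and the step I expect to be the main obstacle. Following B\'ar\'any's strategy, I would assume for contradiction that $c \colon V(P) \to [d]$ is a proper $d$-coloring of $\KG(P)$, and for each $i \in [d]$ define the open set
\[A_i \coloneq \bigcup_{v \in c^{-1}(i)} \setmid*{y \in S^{d-1}}{\gen{y, u_f} > 0 \text{ for every facet } f \ni v}.\]
Condition \ref{item:gale} guarantees that these $d$ sets cover $S^{d-1}$, so the Lyusternik--Schnirelmann theorem yields a color $i$ and antipodal points $y, -y \in A_i$. These in turn produce vertices $v, v'$ of color $i$ with $\set{u_f : f \ni v} \subseteq H_y$ and $\set{u_{f'} : f' \ni v'} \subseteq H_{-y}$; since $H_y$ and $H_{-y}$ are disjoint and every vertex of $P$ lies on at least one facet, $v \neq v'$ and no facet contains both, so $v$ and $v'$ are adjacent in $\KG(P)$, contradicting properness of $c$. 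The delicate part of the argument is exactly this final extraction: verifying that the disjointness of opposite open hemispheres simultaneously rules out a shared facet and the degenerate case $v = v'$, so that Lyusternik--Schnirelmann genuinely yields a monochromatic Kneser-edge.
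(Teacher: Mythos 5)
Your proposal is correct and follows essentially the same route as the paper: the duality map $B(f^\perp,-)$ for \ref{item:obtuse}$\implies$\ref{item:vectors}, normalization plus the normal-fan/Farkas decomposition for \ref{item:vectors}$\implies$\ref{item:gale}, and the B\'ar\'any-style Lyusternik--Schnirelmann argument for \ref{item:gale}$\implies$\ref{item:chi}. The minor differences (phrasing via normal cones rather than maximizing vertices, and spelling out $x_f\neq 0$ and $v\neq v'$) are cosmetic and all correct.
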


Before we prove \cref{lem:polybu}, we first go over a few preliminaries. First is the following basic fact from convex geometry, which follows from the Farkas lemma \cite{Farkas}.
\begin{prop} \label{prop:farkas}
Consider some $w \in X^*$ and let $v$ be a vertex of $P$ that maximizes $\gen{w, v}$. Then $w$ is a \vocab{conic combination} (a linear combination with nonnegative coefficients) of the outward normals of the facets containing $v$.
\end{prop}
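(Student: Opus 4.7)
My plan is to deduce the statement from polar duality for polyhedral cones, which is one standard form of the Farkas lemma. The first task is to control $P$ near $v$ by its tangent cone. Writing $c_f = \max_{x \in P}\gen{f^\perp, x}$ for each facet $f$, we have $P = \bigcap_{f \in F(P)} \set{x \in X : \gen{f^\perp, x} \le c_f}$, and $v \in f$ if and only if $\gen{f^\perp, v} = c_f$. The key preparatory step is to identify the set of directions $y \in X$ for which $v + \epsilon y \in P$ for all sufficiently small $\epsilon > 0$ with the cone
\[
C := \set{y \in X : \gen{f^\perp, y} \le 0 \text{ for every } f \in F(P) \text{ with } v \in f}.
\]
The inclusion ``$\supseteq$'' is checked facet by facet: for $f \ni v$ the inequality $\gen{f^\perp, v + \epsilon y} \le c_f$ is exactly $\gen{f^\perp, y} \le 0$, while for $f \not\ni v$ there is strict slack $\gen{f^\perp, v} < c_f$, so the inequality persists for all small enough $\epsilon > 0$. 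The reverse inclusion is immediate from the definition.

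Next I would use the hypothesis that $v$ maximizes $\gen{w, \cdot}$ on $P$. For any $y \in C$, taking $\epsilon > 0$ small enough gives $v + \epsilon y \in P$, and hence $\gen{w, v + \epsilon y} \le \gen{w, v}$; dividing by $\epsilon$ yields $\gen{w, y} \le 0$. Thus $w$ lies in the polar cone $C^\circ$.

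Finally, the Farkas lemma (equivalently, the bipolar theorem for finitely generated polyhedral cones) asserts that $C^\circ$ is precisely the conic hull of the finitely many normals $\set{f^\perp : f \in F(P),\ v \in f}$ used to cut out $C$. This exhibits $w$ as a conic combination of the outward normals of the facets containing $v$, which is the desired conclusion. The only subtle point in the argument is the tangent cone identification (and in particular the uniform $\epsilon$ argument for facets not containing $v$); once that is in hand, the Farkas lemma delivers the conclusion with no additional work.
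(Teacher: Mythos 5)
Your argument is correct and is exactly the route the paper intends: the paper states this proposition without proof, attributing it to the Farkas lemma, and your write-up (reduce to the tangent cone at $v$ cut out by the normals of the facets through $v$, observe $w$ lies in its polar, and invoke polar duality for polyhedral cones) is the standard fleshing-out of that citation. No gaps.
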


We will also use a topological tool in the form of the Lyusternik-Shnirelmann theorem.
\begin{thm}[Lyusternik-Shnirelmann \cite{LS}] \label{thm:ls}
If sets $A_1, A_2, \ldots, A_d$ form an open cover of the $(d-1)$-sphere $S^{d-1}$, then there exists some $i \in [d]$ and $x \in S^{d-1}$ such that $x, -x \in A_i$.
\end{thm}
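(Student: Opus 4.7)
The plan is to prove the three implications in sequence, with the bulk of the content coming in (iii)$\implies$(iv).

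\textbf{(i)$\implies$(ii):} Viewing the bilinear form $B$ on $X^*$ as a linear map $\Phi: X^* \to X^{**}\cong X$ characterized by $\gen{w, \Phi(w')} = B(w, w')$, I would simply set $x_f := \Phi(f^\perp)$. Then $\gen{f_1^\perp, x_{f_2}} = B(f_1^\perp, f_2^\perp) > 0$ whenever $f_1, f_2$ share a vertex, exactly as required.

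\textbf{(ii)$\implies$(iii):} First I would fix an arbitrary inner product on $X$, giving a unit sphere $S^{d-1}\subset X$ and an identification $X \cong X^*$. Taking $f_1 = f_2 = f$ in (ii) forces $x_f \ne 0$, so the normalization $u_f := x_f/\|x_f\|$ is well defined. Given an open hemisphere $H = \{y \in S^{d-1} : \gen{w, y} > 0\}$ with $w \in S^{d-1}$, the natural candidate for $v$ is a vertex of $P$ maximizing $\gen{w, \cdot}$. By \cref{prop:farkas}, $w$ is a conic combination of the outward normals $f^\perp$ for facets $f$ containing $v$, with at least one coefficient positive since $w \ne 0$. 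For any facet $g$ through $v$, expanding $\gen{w, x_g}$ via this decomposition produces a sum of nonnegative terms with at least one strictly positive (using (ii) applied to the pair of facets $f, g$ sharing $v$), giving $u_g \in H$.

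\textbf{(iii)$\implies$(iv):} This is the B\'ar\'any-style core of the lemma. Suppose for contradiction that $\KG(P)$ admits a proper coloring $c: V(P) \to [d]$. For each color $i\in[d]$, I would form the set
\[A_i := \{u \in S^{d-1} : \exists\, v \in V(P),\ c(v) = i,\ \gen{u, u_f} > 0 \text{ for all } f \ni v\},\]
which is open as a finite union (over vertices) of finite intersections of open half-spaces. The hemisphere condition in (iii), applied to the open hemisphere centered at each $u$, guarantees $\bigcup_i A_i = S^{d-1}$. \cref{thm:ls} then yields some $i$ and an antipodal pair $\pm u \in A_i$, corresponding to vertices $v, v'$ both of color $i$. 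If they shared a facet $f$, then $\gen{u, u_f} > 0$ and $\gen{-u, u_f} > 0$ would contradict each other; hence $v$ and $v'$ are adjacent in $\KG(P)$, contradicting $c(v) = c(v')$.

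The main conceptual obstacle is arranging the hemisphere language in (iii) so that it precisely matches what \cref{thm:ls} consumes; once the open cover is set up correctly, the contradiction from antipodes is immediate. The first two implications are then essentially direct translations via duality and \cref{prop:farkas}.
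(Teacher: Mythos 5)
Your proposal does not prove the stated theorem. The statement is the Lyusternik--Schnirelmann theorem itself: given any open cover $A_1,\dots,A_d$ of $S^{d-1}$, some $A_i$ contains an antipodal pair. What you have written is instead a proof of \cref{lem:polybu} (the chain $\text{(i)}\implies\text{(ii)}\implies\text{(iii)}\implies\text{(iv)}$ about Kneser graphs of polytopes), and in step (iii)$\implies$(iv) you explicitly \emph{invoke} \cref{thm:ls} to extract the antipodal pair. Read as a proof of \cref{thm:ls}, this is circular; read as a proof of \cref{lem:polybu}, it is essentially the paper's argument but answers the wrong question. Note also that the paper does not prove \cref{thm:ls} at all --- it cites it to the literature as a known consequence of the Borsuk--Ulam theorem, so there is no ``paper's own proof'' for your argument to parallel.

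If you did want to supply a proof, the standard route is via Borsuk--Ulam: define $f\colon S^{d-1}\to\setr^{d-1}$ by $f(x)=\bigl(\operatorname{dist}(x,S^{d-1}\setminus A_1),\dots,\operatorname{dist}(x,S^{d-1}\setminus A_{d-1})\bigr)$, which is continuous, and obtain $x$ with $f(x)=f(-x)$. If some coordinate $i$ of this common value is positive, then $x,-x\in A_i$; if all coordinates vanish, then neither $x$ nor $-x$ lies in any of $A_1,\dots,A_{d-1}$, so both lie in $A_d$ since the $A_i$ cover the sphere. No combinatorial or polytopal argument of the kind you propose can substitute for this genuinely topological input.
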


\begin{rmk}
It follows from \cref{prop:farkas} that the symmetrization of any $B$ that satisfies \cref{item:obtuse} must be positive definite. Indeed, given any nonzero $w \in X^*$, take a vertex $v$ of $P$ maximizing $\gen{w, v}$. By \cref{prop:farkas} we may write $w = \sum_f c_f f^\perp$ where $c_f \geq 0$ and the sum is over facets $f$ adjacent to $v$, which in turn implies that
\[B(w, w) = \sum_{f,f'} c_fc_{f'}B(f^\perp, (f')^\perp) > 0.\]
Therefore, \cref{item:obtuse} has the geometric interpretation that we may impose a Euclidean structure on $X$ such that any two facets sharing a vertex meet at an obtuse angle.
\end{rmk}

\begin{proof}[Proof of \cref{lem:polybu}]
\textbf{$\text{\ref{item:obtuse}} \implies \text{\ref{item:vectors}}$.} Set $x_f = B(f^\perp, -)$.

\textbf{$\text{\ref{item:vectors}} \implies \text{\ref{item:gale}}$.} Suppose $x_f \in X$ satisfy \cref{item:vectors}. Pick an arbitrary inner product on $X$ and without loss of generality assume all the $x_f$ have length $1$. Let $u_f \in S^{d-1}$ correspond to $x_f$ under the identification of $S^{d-1}$ with the unit sphere in $X$.

Under this identification, an open hemisphere is given by $\setmid{x \in S^{d-1}}{\gen{w, x} > 0}$ for some nonzero $w \in X^*$. Let $v$ be a vertex of $P$ that maximizes $\gen{w, v}$ and let $S$ be the set of facets containing $v$. By \cref{prop:farkas} we may write $w = \sum_{f \in S} c_f f^\perp$ for nonnegative $c_f$, which are not all zero since $w$ is nonzero. Then, for each $f \in S$, we must have
\[\gen{w, x_f} = \sum_{f' \in S} c_{f'} \gen{(f')^\perp, x_f} > 0,\]
meaning that $v$ satisfies the desired condition.

\textbf{$\text{\ref{item:gale}} \implies \text{\ref{item:chi}}$.}
Suppose there exists a proper $d$-coloring of $\KG(P)$. Then, for $i \in [d]$, define $A_i \subseteq S^{d-1}$ to be the set of $w \in S^{d-1}$ for which there exists some $v \in V(P)$ of color $i$ such that $\gen{w, u_f} > 0$ for every $f \in F(P)$ containing $v$. Observe that the $A_i$ are open, and that by \cref{item:gale} they cover $S^{d-1}$. By \cref{thm:ls}, there must exist some $i \in [d]$ and nonzero $w \in S^{d-1}$ such that $w, -w\in A_i$. Thus there exist vertices $v, v' \in V(P)$ both of color $i$, such that $\gen{w, u_f} > 0$ for any $f$ containing $v$ and $\gen{w, u_f} < 0$ for any $f$ containing $v'$. However, such $v$ and $v'$ have no facets in common, a contradiction.
\end{proof}

\subsection{The associahedron as a secondary polytope}
So far we have only treated the associahedron as a combinatorial object, but \cref{lem:polybu} concerns polytopes that have been geometrically realized. Our next task is thus to find a suitable realization of the associahedron with which to verify the conditions of \cref{lem:polybu}.

Although there are many realizations of the associahedron (see \cite{MR3437894}), here we will use a realization due to Gelfand, Kapranov, and Zelevinsky \cite{GKZ} which is notable for how it respects the dihedral symmetry of $\mT_n$. Specifically, the associahedron is realized as the secondary polytope of a convex $n$-gon, which can be viewed as the same $n$-gon of which we are taking triangulations. Secondary polytopes have a rich theory, but for simplicity we will only mention those most relevant to our applications; the reader interested in more details can consult \cite{GKZ,MR2743368}.

Let $Q$ be a convex $n$-gon in the plane with vertex $V(Q)$. For a triangulation $T$ of $Q$ (which we identify with a suitable $T \in \mT_n$) and vertex $p \in V(Q)$, let $Q_{T, p} \subseteq Q$ be the region of $Q$ consisting of the union of all triangles in $T$ with $p$ as a vertex. Let $v_T =(\vol Q_{T, p})_{p \in V(Q)} \in \setr^{V(Q)}$, where $\vol R$ denotes the area of $R$. The \vocab{secondary polytope} $\Sigma_Q$ of $Q$ is defined up to translation as the convex hull of the points $\setmid{v_T}{T\in\mT_n}$.

As defined above, $\Sigma_Q$ lies in an $(n-3)$-dimensional affine subspace of $\setr^{V(Q)}$. Indeed, given any affine linear form $\ell$, we find that
\begin{align*}
\sum_{p \in V(Q)} \vol Q_{T, p} \cdot \ell(p) &= \sum_{\triangle p_1p_2p_3 \text { in }T} \vol \triangle p_1p_2p_3 \cdot (\ell(p_1) + \ell(p_2) + \ell(p_3)) \\
&= \sum_{\triangle p_1p_2p_3 \text { in }T} \iint_{\triangle p_1p_2p_3} 3\ell\,\mathrm{d}A \\
&= \iint_{Q} 3\ell\,\mathrm{d}A,
\end{align*}
where $\mathrm{d}A$ is the canonical area form. As a result, we find that after a suitable translation, which we pick arbitrarily and henceforth fix, $\Sigma_Q$ lies in the subspace $X_Q$ of $\setr^{V(Q)}$ consisting of points $(c_p)_{p \in V(Q)}$ such that $\sum_{p \in V(Q)} c_p \cdot (1, p) = 0$ as an equality in $\setr^3$. For the rest of this paper, we will represent elements of $X_Q$ as formal linear combinations of points in $V(Q)$, so that $(c_p)_{p \in V(Q)}$ is written as $\sum_{p \in V(Q)} c_p \cdot p$. It then becomes natural to identify
\[X_Q^* = \set{\text{functions }V(Q) \to \setr}/\set{\text{affine functions }V(Q) \to \setr}.\]
Letting $[\psi]$ denote the class of a function $V(Q) \to \setr$ in $X_Q^*$, the pairing $\gen{-,-} \colon X_Q^* \times X_Q \to \setr$ is given by $\gen{[\psi], \sum_p c_p \cdot p} = \sum_p c_p\psi(p)$.

\begin{rmk} \label{rmk:nocan}
The pragmatic reader might wonder why we are going through so much effort to separate $X_Q$ and $X_Q^*$. The reason is that if we treat $X_Q$ and $X_Q^*$ as a subspace and a quotient of $\smash{\setr^{V(Q)}}$, respectively, it becomes tempting to use the inner product on $\setr^{V(Q)}$, which for our purposes turns out to a complete red herring. This choice of notation emphasizes that elements of $X_Q$ and $X_Q^*$ are fundamentally different objects, which should not be naturally identified.
\end{rmk}

Having described the vertices of $\Sigma_Q$, we now turn to its facets. Given a function $\psi\colon V(Q) \to \setr$, define the function $g_\psi \colon Q \to \setr$ given by
\[g_\psi(x) = \max_{\crampedsubstack{c_p \geq 0 \\ \sum_{p \in V(Q)} c_p = 1 \\ \sum_{p \in V(Q)} c_p\cdot p = x}} \sum_{p \in V(Q)} c_p \psi(p);\]
this should be thought of as the ``upper boundary'' of the convex hull of the points $\setmid{(p, \psi(p)) \in \setr^3}{p \in V(Q)}$. Note that shifting $\psi$ by some affine function shifts $g_\psi$ by the same affine function, so given some class $[\psi] \in X_Q^*$, it makes sense to discuss $g_{\psi}$, which is well-defined up to adding some affine function.

We may now describe how the associahedron is realized as a secondary polytope.
\begin{thm}[Thm.~1.7, Thm.~2.4, Prop.~3.4, Prop.~3.5 in {\cite[Ch.~7]{GKZ}}] \label{thm:GKZ}
$\Sigma_Q$ is a geometric realization of $\Assoc_{n-3}$ such that
\begin{itemize}
    \item for every $T \in \mT_n$, the vertex of $\Assoc_{n-3}$ corresponding to $T$ is realized as $v_T$;
    \item for every $d \in \Diag_n$ splitting $Q$ into polygons $Q_1$ and $Q_2$, the facet of $\Assoc_{n-3}$ corresponding to $d$ is realized as a facet whose outward normals are given by the nonzero $[\psi] \in X_Q^*$ such that $g_\psi$ is affine when restricted to both $Q_1$ and $Q_2$.
\end{itemize}
\end{thm}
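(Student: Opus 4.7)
The plan is to describe all faces of $\Sigma_Q$ via a single key identity, then exploit the special structure of points in convex position to match the resulting combinatorics with that of the associahedron.

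The central computation -- really just the calculation already in the excerpt applied to a piecewise-affine function in place of a globally affine one -- is the following. For any function $\psi\colon V(Q) \to \setr$ and triangulation $T \in \mT_n$,
\[\gen{[\psi], v_T} = \sum_{p \in V(Q)} \psi(p) \vol Q_{T,p} = 3 \iint_Q \tilde\psi_T \, \mathrm{d}A,\]
where $\tilde\psi_T \colon Q \to \setr$ is affine on each triangle of $T$ and agrees with $\psi$ at each vertex of $Q$. I would then compare $\tilde\psi_T$ against $g_\psi$ and show $\tilde\psi_T \leq g_\psi$ pointwise on $Q$, with equality iff $g_\psi$ is affine on each triangle of $T$. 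The inequality is immediate from the definition of $g_\psi$, since each point of a triangle is a convex combination of its three vertices. For the equality case, I use that every $p \in V(Q)$ is extreme, so $g_\psi(p) = \psi(p)$; hence if $g_\psi$ is affine on a triangle of $T$, the two affine functions $g_\psi$ and $\tilde\psi_T$ agree at the three vertices and thus on the whole triangle.

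Since $3\iint_Q g_\psi \, \mathrm{d}A$ does not depend on $T$, the set of $T$ maximizing $\gen{[\psi], v_T}$ consists exactly of the triangulations refining the \emph{coherent subdivision} $S_\psi$ of $Q$ whose cells are the maximal convex regions on which $g_\psi$ is affine. Convex position of $V(Q)$ forces every $(p, \psi(p))$ to lie on the upper convex hull of $\setmid{(p', \psi(p'))}{p' \in V(Q)}$, so the cells of $S_\psi$ are sub-polygons of $Q$ with vertices in $V(Q)$ bounded by pairwise non-crossing chords; that is, $S_\psi$ is determined by a set $D_\psi \subseteq \Diag_n$ of non-crossing diagonals. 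Conversely, every non-crossing diagonal set arises as some $D_\psi$ via an explicit construction -- e.g.\ take a concave piecewise-affine function on $Q$ with a crease along each diagonal of $D$, and set $\psi$ to its values on $V(Q)$.

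Given this correspondence, the faces of $\Sigma_Q$ are indexed by non-crossing diagonal sets $D \subseteq \Diag_n$ via $F_D = \conv\setmid{v_T}{T \supseteq D}$, with reverse inclusion corresponding to face containment -- exactly the face lattice of $\Assoc_{n-3}$. In particular, a full triangulation $D = T$ gives the vertex $v_T$, and $D = \set{d}$ gives the face $F_d = \conv\setmid{v_T}{T \ni d}$ whose outward normals are precisely the nonzero $[\psi]$ for which $g_\psi$ is affine on both $Q_1$ and $Q_2$. A dimension count -- $\dim X_Q = n-3$ and $\dim F_d = (|V(Q_1)|-3) + (|V(Q_2)|-3) = n-4$ -- confirms $F_d$ is a facet, while the incidence $v_T \in F_d \iff T \ni d$ is immediate. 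The main obstacle I expect is the converse half of the subdivision classification -- explicitly realizing each non-crossing diagonal set as some $D_\psi$ -- as this is the one step that genuinely requires the convex-position hypothesis rather than being a formal consequence of the secondary-polytope framework.
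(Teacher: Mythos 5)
The paper does not prove this theorem; it is a direct citation of Thm.~1.7, Thm.~2.4, Prop.~3.4, and Prop.~3.5 in Ch.~7 of \cite{GKZ}. Your proposal correctly reconstructs that argument: the identity $\gen{[\psi],v_T} = 3\iint_Q \tilde\psi_T\,\mathrm{d}A$ plus the comparison $\tilde\psi_T \le g_\psi$ is the general secondary-polytope dictionary (faces of $\Sigma_Q$ correspond to coherent subdivisions), and the step where you use convex position to force $g_\psi(p)=\psi(p)$ for every $p\in V(Q)$ --- so $S_\psi$ is a polygonal subdivision with vertices in $V(Q)$ and, conversely, every non-crossing diagonal set is realized --- is exactly what Prop.~3.4 and Prop.~3.5 supply in the polygon case. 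Two minor presentational cautions worth noting: the equality $\gen{[\psi],v_T}=\sum_p \psi(p)\vol Q_{T,p}$ depends on the representative $\psi$ and on the translation putting $\Sigma_Q$ into $X_Q$, so it should be stated as equality up to a $T$-independent additive constant (which is all you use); and before invoking the dimension count to identify $F_d$ as a facet you should first record that $\dim \Sigma_Q = n-3$, which follows for instance from the face-lattice isomorphism you have already established.
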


\subsection{Putting things together}
We now give two independent proofs that the associahedron satisfies \cref{lem:polybu}, each of which implies \cref{thm:main}.

\begin{figure}[t] \centering
\subcaptionbox{\label{fig:p1}}{\begin{tikzpicture}[scale=2,trim left=-2cm,trim right=2cm]
\draw (0,0) circle (1);
\path[name path=d1] (140:1) node[dot,label=120:{$+\frac{b}{a+b} \cdot p_1$}]{} -- (-70:1) node[dot,label=-70:{$+\frac{a}{a+b} \cdot p_2$}]{};
\path[name path=d2] (-160:1) node[dot,label=-160:{$-\frac{d}{c+d} \cdot q_1$}]{} -- (30:1) node[dot,label=30:{$-\frac{c}{c+d} \cdot q_2$}]{};
\draw[name intersections={of=d1 and d2},every label/.append style={label distance=2pt}] (140:1) -- node[coordinate,label=35:$a$]{} (intersection-1) node[dot]{} -- node[coordinate,label=35:$b$]{} (-70:1)
(-160:1) -- node[coordinate,label=115:$c$]{} (intersection-1) -- node[coordinate,label=115:$d$]{} (30:1);
\end{tikzpicture}}\hspace{5pc}%
\subcaptionbox{\label{fig:p2}}{\begin{tikzpicture}[scale=2,trim left=-2cm,trim right=2cm]
\draw (0,0) circle (1);
\draw (110:1) node[dot,label=110:$q_1$]{} -- (-130:1) node[dot,label=-130:$q_2$]{};
\draw (85:1) node[dot,label=85:$q'_1$]{} -- (-50:1) node[dot,label=-50:$q^{\smash{\prime}}_2$]{};
\node[dot,label=130:$p_i$] at (130:1) {};
\node[dot,label=40:$p_j$] at (40:1) {};
\node[font=\small] at (-0.75,0.2) {$\begin{aligned}\ell&<0\\[-4pt]\ell'&>0\end{aligned}$};
\node[font=\small] at (-0.05,-0.1) {$\begin{aligned}\ell&>0\\[-4pt]\ell'&>0\end{aligned}$};
\node[font=\small] at (0.65,0.2) {$\begin{aligned}\ell&>0\\[-4pt]\ell'&<0\end{aligned}$};
\end{tikzpicture}}
\caption{\subref{fig:p1} An element of $X_Q$, used in the proof of \cref{lem:vec}. \subref{fig:p2} Points used in the proof of \cref{lem:circ}.}
\end{figure}

\begin{lem} \label{lem:vec}
$\Sigma_Q$ satisfies \cref{item:vectors} of \cref{lem:polybu}.
\end{lem}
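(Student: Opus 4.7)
The plan is to construct, for each diagonal $d = \{p_1,p_2\}$ and each crossing diagonal $d^* = \{q_1,q_2\}$ meeting $d$ at an interior point $r$ with segment lengths $a,b,c,e$ labeled as in \cref{fig:p1}, the four-point element
\[y_{d,d^*} \coloneq \tfrac{b}{a+b}p_1 + \tfrac{a}{a+b}p_2 - \tfrac{e}{c+e}q_1 - \tfrac{c}{c+e}q_2 \in X_Q,\]
and then to set $x_d \coloneq \sum_{d^*\text{ crosses }d}y_{d,d^*}$. Membership of $y_{d,d^*}$ in $X_Q$ is immediate since the positive and negative halves each express $r$ as a convex combination with weights summing to $1$.

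To evaluate the pairing against an outward normal I would use the representative $\psi_{d'}(p) \coloneq -|\ell_{d'}(p)|$, where $\ell_{d'}$ is any affine function on the plane vanishing on $d'$. Applying the integral identity displayed at the start of \cref{sec:mainproof} triangle by triangle shows that $\langle[\psi],v_T\rangle$ is a constant plus $3\iint_Q \widetilde{\psi}_T\,\mathrm{d}A$, where $\widetilde{\psi}_T$ is the piecewise-linear interpolation of $\psi$ on the triangles of $T$; combined with the concavity of $-|\ell_{d'}|$ (which forces $\widetilde{\psi}_T \leq \psi$, with equality iff $T \ni d'$) and \cref{thm:GKZ}, this confirms that $[\psi_{d'}]$ is an outward rather than an inward normal. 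Expanding the pairing now yields
\[\langle[\psi_{d'}],y_{d,d^*}\rangle = -\tfrac{b|\ell_{d'}(p_1)|+a|\ell_{d'}(p_2)|}{a+b} + \tfrac{e|\ell_{d'}(q_1)|+c|\ell_{d'}(q_2)|}{c+e},\]
and by strict convexity of $|\cdot|$ around $0$, each weighted average equals $|\ell_{d'}(r)|$ when the underlying diagonal lies entirely on one side of $d'$ and strictly exceeds $|\ell_{d'}(r)|$ when it crosses $d'$. Since $d$ does not cross $d'$ by hypothesis, the first term is exactly $-|\ell_{d'}(r)|$, so every summand of $\langle[\psi_{d'}],x_d\rangle$ is nonnegative, and is strictly positive precisely when $d^*$ crosses $d'$.

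The main obstacle is therefore the combinatorial claim that for any two (possibly equal) noncrossing diagonals $d, d'$ of a convex $n$-gon with $n \geq 4$, some diagonal $d^*$ crosses both; granted this, at least one summand in the above decomposition of $\langle[\psi_{d'}],x_d\rangle$ is strictly positive and \cref{item:vectors} follows. The claim itself reduces to a short case analysis on the cyclic position of the endpoints: when $d = d'$ any diagonal crossing $d$ works; when $d$ and $d'$ share a vertex, their three distinct endpoints cut the remaining $n-3$ vertices into three cyclic arcs, the two arcs adjacent to the shared vertex being nonempty because $d$ and $d'$ are diagonals, and any $d^*$ joining a vertex of one such arc to a vertex of the other does the job; when $d'$ is strictly nested on one side of $d$, one joins a vertex strictly interior to the arc cut off by $d'$ with any vertex on the opposite side of $d$.
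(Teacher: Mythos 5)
Your construction of $x_d$ as $\sum_{d^*\text{ crosses }d} y_{d,d^*}$ is exactly the paper's, but the verification is more elaborate than it needs to be. The paper takes the representative $\psi=\min(\ell_{d'},0)=\tfrac12(\ell_{d'}-|\ell_{d'}|)$, which is a positive multiple of your $-|\ell_{d'}|$ in $X_Q^*$, and then uses only the \emph{sign pattern} of $x_d$: the coefficients at $p_1,p_2$ are positive, every other coefficient is strictly negative (because every vertex other than $p_1,p_2$ is the endpoint of at least one diagonal crossing $p_1p_2$), while $\psi$ is nonpositive, vanishes at $p_1,p_2$, and is strictly negative at some vertex since $d'$ is a genuine diagonal. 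Thus $\langle[\psi],x_d\rangle=\sum_{p\neq p_1,p_2}c_p\psi(p)$ is a sum of nonnegative terms, at least one of which is strictly positive, and you are done. This sidesteps your term-by-term bookkeeping and, with it, the auxiliary combinatorial claim that any two noncrossing diagonals of an $n$-gon are crossed by a common third diagonal; that claim is true and your three-case sketch of it is sound, but it is considerably more work than the one-line sign observation. Your re-derivation, via the integral identity, of the fact that $[\psi_{d'}]$ points outward rather than inward is also unnecessary: \cref{thm:GKZ} as stated already pins down the orientation, since $g_\psi$ and $g_{-\psi}$ are in general not affine on the same pieces. Both proofs are correct and begin from the same $x_d$; yours buys a finer, summand-by-summand account of exactly which crossing diagonals make the pairing positive, at the cost of extra casework that the paper's sign argument avoids.
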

\begin{proof}
Given a facet $f$ of $\Sigma_Q$ corresponding to a diagonal $p_1p_2$ of $Q$, we let $x_f \in X_Q$ be any linear combination $\sum_{p \in V(Q)} c_p \cdot p$ such that $c_p > 0$ if $p = p_1p_2$ and $c_p < 0$ for all other $p$. To show that such an $x_f$ exists, we observe that given any diagonal $q_1q_2$ that crosses $p_1p_2$ at a point $r$, we have
\[\frac{\abs{rp_2}}{\abs{p_1p_2}} \cdot p_1 + \frac{\abs{rp_1}}{\abs{p_1p_2}} \cdot p_2 - \frac{\abs{rq_2}}{\abs{q_1q_2}} \cdot q_1 - \frac{\abs{rq_1}}{\abs{q_1q_2}} \cdot q_2 \in X_Q,\]
where $\abs{-}$ denotes length. This is illustrated in \cref{fig:p1}. Summing over all choices of $q_1q_2$ yields the desired $x_f$.

Now suppose $q_1q_2$ is a diagonal that does not cross $p_1p_2$. If $\ell$ is a nonzero affine function that vanishes at $q_1$ and $q_2$ and is nonnegative at $p_1$ and $p_2$, the function $\psi = \min(\ell, 0)$ is a representative of the outward normal of the facet corresponding to $q_1q_2$. Moreover, $\psi$ is never positive and zero at $p_1$ and $p_2$. It follows that $\gen{[\psi], x_f} > 0$.
\end{proof}
\begin{lem} \label{lem:circ}
If $Q$ is a regular $n$-gon, then $\Sigma_Q$ satisfies \cref{item:obtuse} of \cref{lem:polybu}.
\end{lem}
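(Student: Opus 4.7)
I would exploit the dihedral symmetry of the regular $n$-gon $Q$ to build a $D_n$-invariant symmetric positive-definite bilinear form $B$ on $X_Q^*$, then verify \ref{item:obtuse} via Fourier analysis on $\mathbb{Z}/n$. Place the vertices $p_0,\dots,p_{n-1}$ at the $n$th roots of unity, so the dihedral group $D_n$ acts on $V(Q)$ and on $X_Q^*$. Under Fourier analysis, affine functions on $V(Q)$ span the Fourier modes $m\in\{0,1,n-1\}$, so $X_Q^*$ identifies with the span of non-affine modes $m\in\{2,\dots,n-2\}$, and any $D_n$-invariant symmetric bilinear form has the Fourier representation $B([\psi],[\varphi])=\sum_{m=2}^{n-2}w_m\,\hat\psi(m)\overline{\hat\varphi(m)}$ with $w_m=w_{n-m}>0$.

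My candidate is the discrete Dirichlet form on the cycle, with weights $\lambda_m = 4\sin^2(\pi m/n)$, equivalently
\[
B([\psi],[\varphi]) = \sum_{k=0}^{n-1}(\bar\psi(p_{k+1})-\bar\psi(p_k))(\bar\varphi(p_{k+1})-\bar\varphi(p_k)),
\]
where $\bar\psi,\bar\varphi$ are the unique representatives of $[\psi],[\varphi]$ orthogonal (in the counting inner product on $\mathbb{R}^{V(Q)}$) to the subspace of affine functions. This $B$ is manifestly $D_n$-invariant, symmetric, and positive definite on $X_Q^*$. Using the tent representative $\psi_d=\min(\ell_d,0)$ from the proof of \cref{lem:vec} for the outward normal at each facet, by $D_n$-invariance it suffices to fix $d_1 = p_0p_j$ and enumerate non-crossing partners $d_2$ by orbit type (classified by the number of shared endpoints and the relative arc position); the Fourier formula $B([\psi_{d_1}],[\psi_{d_2}]) = \tfrac{1}{n}\sum_{m=2}^{n-2}\lambda_m\hat\psi_{d_1}(m)\overline{\hat\psi_{d_2}(m)}$ then reduces positivity to a trigonometric inequality in each case.

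The main obstacle is verifying these inequalities uniformly over all orbits. Since the tent function is zero on one arc of $d$ and affine on the other, its Fourier coefficients $\hat\psi_d(m)$ admit closed-form Fej\'er/Dirichlet-kernel expressions, so each orbit inequality should reduce to standard kernel identities; one has to check, in particular, that the monotone increase of $\lambda_m$ through the "low" frequencies $m=2,3,\dots,\lfloor n/2\rfloor$ more than compensates the oscillating phases coming from shifts between $d_1$ and $d_2$. Should the Dirichlet weights fail in some orbit, the space of $D_n$-invariant positive-definite forms on $X_Q^*$ is a positive cone of dimension $\lfloor n/2\rfloor-1$, and one could instead search this cone (for instance by interpolating from small-$n$ computations) for a weight sequence simultaneously satisfying all orbit inequalities.
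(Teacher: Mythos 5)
Your framing — exploit $D_n$-invariance and parametrize invariant forms on $X_Q^*$ by positive Fourier weights $(w_m)_{m=2}^{n-2}$, $w_m = w_{n-m}$ — is exactly the right language, and is essentially equivalent to the circulant form $\tilde B(\psi,\psi') = \sum_{i,j} a_{i-j}\psi(p_i)\psi'(p_j)$ the paper uses. However, your candidate Dirichlet weights $\lambda_m = 4\sin^2(\pi m/n)$ do not work, and your guess that the ``monotone increase of $\lambda_m$'' saves positivity is false already for moderate $n$. Consider the two ear diagonals $\{p_0,p_2\}$ and $\{p_0,p_{n-2}\}$, which share the vertex $p_0$ and hence share a star triangulation, so their facets share a vertex of $\Sigma_Q$. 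Their outward normals can be represented by $\psi = c\cdot\mathbf{1}_{p_1}$ and $\psi' = c'\cdot\mathbf{1}_{p_{n-1}}$ with $cc'>0$, so positivity requires the position-space kernel at offset $k=2$ to be positive. For the \emph{projected} Laplacian you define, that kernel is $\tilde a_k = \tfrac{1}{n}\sum_{m=2}^{n-2}\lambda_m e^{2\pi i km/n}$, which for $|k|\geq 2$ equals $-\tfrac{2\lambda_1\cos(2\pi k/n)}{n}$ because the full cycle-Laplacian kernel vanishes there. Thus $\tilde a_2 = -\tfrac{2\lambda_1\cos(4\pi/n)}{n}$, which is $0$ at $n=8$ and strictly negative for all $n\geq 9$. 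So the Dirichlet form violates \ref{item:obtuse} for every $n\geq 8$.

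The fix the paper finds is to take $w_m = \lambda_m - \lambda_1$ rather than $\lambda_m$. This is a different invariant form on $X_Q^*$ (you cannot rescale it away: subtracting a constant from the Fourier weights shifts the form by a multiple of the $\ell^2$ pairing on the non-affine modes), and it is chosen precisely so that the resulting position-space kernel is the \emph{positive constant} $\tfrac{\lambda_1}{n}$ for all $|k|\geq 2 \pmod n$, with the only possibly-nonpositive entries confined to $k\in\{0,\pm1\}$. This turns the verification from the delicate orbit-by-orbit trigonometric estimate you anticipate into a one-line observation: for noncrossing diagonals meeting at a vertex, the tent functions $\psi=\min(\ell,0)$ and $\psi'=\min(\ell',0)$ are strictly negative on index sets that are separated by at least $2$ around the cycle, so every surviving term in $\sum_{i,j}a_{i-j}\psi(p_i)\psi'(p_j)$ has $a_{i-j}>0$ and $\psi(p_i)\psi'(p_j)>0$. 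In short, your plan is missing the key idea of tuning the weights so the convolution kernel is uniformly positive off $\{0,\pm1\}$; without it the Fourier-side case analysis you propose would have to discover this shift anyway, and the specific weights you chose provably fail.
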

\begin{proof}
In this proof, all indices are taken modulo $n$.

Let $p_0, p_1, \ldots, p_{n-1}$ be the vertices of $Q$ in order and define the sequence $a_0, a_1, \ldots, a_{n-1}$ given by
\[a_k = \frac{2 - 2\cos(2\pi/n)}{n} + \begin{cases} 2\cos(2\pi/n) & k = 0 \\ -1 & k=\pm 1 \\ 0 & \text{else}.\end{cases}\]
Let $\tilde B$ be the bilinear form on functions $V(Q) \to \setr$ given by
\[\tilde B(\psi, \psi') = \sum_{i,j =0}^{n-1} a_{i-j} \psi(p_i) \psi'(p_j),\]
where indices are taken modulo $n$. To show that this descends to a bilinear form $B$ on $X^*_Q$, it suffices to show that $\tilde B(\psi, -) = 0$ whenever $\psi$ is affine. (Note that since $(a_k)$ is even, $B$ is symmetric.) This is equivalent to the condition
\[\sum_{k=0}^{n-1} a_k = \sum_{k=0}^{n-1} a_k \cos(2\pi k/n) = \sum_{k=0}^{n-1} a_k \sin(2\pi k/n) = 0,\]
which can be easily checked.

We now show that $B$ satisfies the desired condition. Consider two facets $f$ and $f'$ of $\Sigma_Q$ which share a vertex; these correspond to noncrossing diagonals $q_1q_2$ and $q'_1q'_2$ in $Q$. Consider nonzero affine functions $\ell$ and $\ell'$ such that $\ell(q_1) = \ell(q_2) = \ell'(q'_1) = \ell'(q'_2) = 0$ and $\ell(q'_1), \ell(q'_2), \ell'(q_1), \ell'(q_2) \geq 0$. The functions $\psi = \min(\ell, 0)$ and $\psi' = \min(\ell', 0)$ correspond to outward normals to $f$ and $f'$, respectively, so
\[B(f^\perp, (f')^\perp) = \tilde B(\psi, \psi') = \sum_{\substack{0 \leq i,j < n \\ \ell(p_i), \ell'(p_j) < 0}} a_{i-j} \ell(p_i) \ell'(p_j).\]
However, observe that for all $(i,j)$ with $\ell(p_i), \ell'(p_j) < 0$, we must have $i-j \notin \set{0,\pm 1}$ (see \cref{fig:p2}), so $a_{i-j} > 0$. Since at least one such pair $(i,j)$ exists, we conclude that $B(f^\perp, (f')^\perp) > 0$, as desired.
\end{proof}
\begin{rmk}
As $X_Q$ is a subspace of $\setr^{V(Q)}$, it inherits an inner product. However, as alluded to in \cref{rmk:nocan}, attempting to use this inner product to show \cref{item:obtuse} of \cref{lem:polybu} fails, with orthogonal facets appearing for $n=6$ and negative dot products appearing for $n \geq 7$. 
\end{rmk}

\begin{proof}[Proof of \cref{thm:main}]
It was shown in \cref{sec:intro} that $\chi(\KG(\mT_n)) \leq n-2$. For the lower bound, combine either \cref{lem:vec} or \cref{lem:circ} with \cref{lem:polybu} to find a convex $n$-gon $Q$ for which $\chi(\KG(\Sigma_Q)) \geq n-2$. Applying \cref{thm:GKZ} yields that $\chi(\KG(\mT_n)) \geq n-2$, as desired.
\end{proof}

\section{Stability Results} \label{sec:stab}
\subsection{Motivational remarks} \label{sec:stabmotiv}
In this section we will show that the chromatic number of $\KG(\mT_n)$ stays unchanged even if some vertices are deleted. Before we do so in full generality, however, it is useful to consider a simple case, as it illustrates many of the ideas that appear. Specifically, we will sketch a proof that $\chi(\KG(\mT_6 \setminus \set{Z})) = 4$, where $Z$ is the triangulation with diagonals $\set{1,5}, \set{2,4}, \set{2,5}$.

To summarize one of our proofs that $\chi(\KG(\mT_6)) = 4$, \cref{lem:vec} finds $x_d \in X_Q$ for $d \in \Diag_6$ such that for any noncrossing $d, d' \in \Diag_6$, we have $\gen{f_{d}^\perp, x_{d'}} > 0$, where $f_d \in F(\Sigma_Q)$ is the facet corresponding to a diagonal $d$. Then, \cref{prop:farkas} implies that every $w \in X_Q^*$ can be written as a conic combination $\sum_{d \in T} c_d f_d^\perp$ for some $T \in \mT_6$. So, for every nonzero $w \in X_Q^*$ there is some $T \in \mT_6$ such that $\gen{w, x_d} > 0$ for every $d \in T$, which implies the desired result from a topological argument.

\begin{figure}
\centering
\subcaptionbox{\label{fig:zswap}}{%
\begin{tikzpicture}[scale=1.6,s0/.style={very thick,line cap=round,line join=round},s1/.style={s0,red},s2/.style={s0,blue,dashed},every label/.append style={label distance=1pt},trim left=-1.6cm,trim right=1.6cm]
\coordinate (c1) at (-180:1);
\coordinate (c2) at (-135:1);
\coordinate (c3) at (-90:1);
\coordinate (c4) at (-45:1);
\coordinate (c5) at (0:1);
\coordinate (c6) at (120:1);
\draw[s1] (c1)-- node[above]{$Z$} (c5)--(c2)--(c4);
\draw[s2] (c1)--(c4);
\draw (c1) node[dot,label={-180:$1$}]{} --(c2) node[dot,label={-135:$2$}]{} --(c3) node[dot,label={-90:$3$}]{}
--(c4) node[dot,label={-45:$4$}]{} --(c5) node[dot,label={0:$5$}]{} --(c6) node[dot,label={120:$6$}]{} --cycle;
\end{tikzpicture}}\hspace{5pc}%
\subcaptionbox{\label{fig:gswap}}{%
\begin{tikzpicture}[scale=1.6,s0/.style={very thick,line cap=round,line join=round},s1/.style={s0,red},s2/.style={s0,blue,dashed},every label/.append style={label distance=1pt},trim left=-1.6cm,trim right=1.6cm]
\coordinate (c1) at (-180:1);
\coordinate (c2) at (-135:1);
\coordinate (c3) at (-90:1);
\coordinate (c4) at (-45:1);
\coordinate (c5) at (0:1);
\coordinate (c6) at (120:1);
\draw[s1] (c5)--(c2);
\draw[s2] (c1)--(c4);
\draw (0,0) circle (1);
\draw (c1) node[dot,label={-180:$i'$}]{} (c2) node[dot,label={-135:$i$}]{}(c3) node[dot]{}
(c4) node[dot,label={-45:$j^{\smash\prime}$}]{} (c5) node[dot,label={0:$j$}]{} (c6) node[dot,label={120:$n$}]{};
\end{tikzpicture}}
\caption{\subref{fig:zswap} An illustration of the argument that $\chi(\KG(\mT_6 \setminus \set{Z})) = 4$. \subref{fig:gswap} An illustration of the $\rsqa$ relation. Here $\set{i,j} \rsqa \set{i',j'}$.}
\end{figure}

Examining the proof of this last step, we find that as long as we can always choose $T$ to be not equal to $Z$, we will have $\chi(\KG(\mT_6 \setminus \set{Z})) = 4$. To ensure that this doesn't happen, it suffices to find an alternate argument for the case when $w$ is a conic combination of $f_{1,5}^\perp, f_{2,4}^\perp, f_{2,5}^\perp$. (For clarity we omit curly brackets in subscripts.) Suppose that it were additionally the case that $\gen{f_{2,5}^\perp, x_{1,4}} > 0$. Then we would have $\gen{w, x_{1,4}} > 0$ in addition to $\gen{w, x_{1,5}}, \gen{w, x_{2,4}} > 0$, implying that we can instead choose $T$ to be $\set{\set{1,4}, \set{1,5}, \set{2,4}}$. Indeed, this turns out to be possible, though unlike \cref{lem:vec}, which applies for all choices of $Q$, we need to choose $Q$ carefully. (In particular, $Q$ cannot be a regular hexagon.) This process is illustrated in \cref{fig:zswap}.

The rest of this section is organized as follows. In \cref{sec:swap} we find a collection of relations $\gen{f_{d}^\perp, x_{d'}} > 0$ that can be satisfied for some $Q$, in addition to the requirements from \cref{item:vectors} of \cref{lem:polybu}. This allows us to delete certain triangulations from our graph. In \cref{sec:star} we show that, with one exception, this process allows any non-star triangulation to be deleted, proving \cref{thm:star}. In \cref{sec:fib} we characterize and enumerate a set of triangulations that can be deleted simultaneously, proving \cref{thm:stab}.

\subsection{A swapping relation} \label{sec:swap}
We define a relation generalizing the $\set{2,5} \to \set{1,4}$ swap in the previous section, illustrated in \cref{fig:gswap}.
\begin{defn}
For $\set{i,j}, \set{i',j'} \in \Diag_n$ with $i<j$ and $i'<j'$, write $\set{i,j} \rsqa \set{i',j'}$ if
\begin{itemize}
\item if $\set{i,j}$ and $\set{i',j'}$ do not cross, or
\item $i' < i < j' < j < n$ and $j'-i > 1$.
\end{itemize}
For $T, T' \in \mT_n$, write $T \rsqa T'$ if $d \rsqa d'$ for all $d \in T$ and $d' \in T'$.
\end{defn}

\begin{lem} \label{lem:lac}
There exists a convex $n$-gon $Q$ and $x_d \in X_Q$ for every $d \in \Diag_n$ such that if $d \rsqa d'$, then $\gen{f_d^\perp, x_{d'}} > 0$, where $f_d$ is the facet of $\Sigma_Q$ corresponding to $d$.
\end{lem}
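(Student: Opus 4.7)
My plan is to extend the construction of \cref{lem:vec}, using the extra freedom in both the shape of $Q$ (which was arbitrary there) and the weighting of the basic relations. Recall that \cref{lem:vec} produces $x_d \in X_Q$ as a sum, over all diagonals $\{q_k,q_l\}$ crossing $d=\{p_i,p_j\}$, of the basic relation
\[\tfrac{|rp_j|}{|p_ip_j|}\cdot p_i + \tfrac{|rp_i|}{|p_ip_j|}\cdot p_j - \tfrac{|rq_l|}{|q_kq_l|}\cdot q_k - \tfrac{|rq_k|}{|q_kq_l|}\cdot q_l,\]
yielding the sign pattern $c_{p_i},c_{p_j}>0$ and $c_{p_k}<0$ otherwise. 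This sign pattern already handles the non-crossing case of $d \rsqa d'$ exactly as in \cref{lem:vec}, so the task reduces to arranging the added crossing case while retaining the sign pattern.

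For $d=\{i,j\}$, $d'=\{i',j'\}$ with $i'<i<j'<j<n$ and $j'-i>1$, I would fix a representative $\psi=\min(\ell,0)$ of $f_d^\perp$ (where $\ell$ is a nonzero affine function vanishing on $p_ip_j$ and positive on the side $Q_1$ containing $p_{j'}$) and expand
\[\langle f_d^\perp, x_{d'}\rangle \;=\; \sum_{p_k \in Q_2 \setminus \{p_i,p_j\}} c_k \,\ell(p_k),\]
where $Q_2$ is the other side of $p_ip_j$. This side contains $p_{i'}$ and, crucially, also $p_n$ because $j<n$. Every term of the sum is positive except for $c_{p_{i'}}\ell(p_{i'})<0$ (coming from $c_{p_{i'}}>0$ paired with $\ell(p_{i'})<0$). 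The idea is to choose $Q$ so that $p_n$ sits very far from $p_1,\ldots,p_{n-1}$ along a direction preserving convexity, and to weight the basic relations so that $|c_{p_n}|$ scales up correspondingly; then $|c_{p_n}\ell(p_n)|$ dominates the single negative contribution. The condition $j'-i>1$ should enter as a nondegeneracy hypothesis, ensuring there is room between $p_i$ and $p_{j'}$ for the geometric estimate to be uniformly bounded.

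The main obstacle is that this domination must be achieved \emph{uniformly} across every pair $(d, d')$ with $d \rsqa d'$, simultaneously for a single choice of $Q$ and a single family of weightings of the basic relations used to form each $x_d$. I expect this to reduce to a comparison between $|c_{p_n}\ell(p_n)|$ and $|c_{p_{i'}}\ell(p_{i'})|$ which can be forced in favor of the former by first fixing the positions of $p_1,\ldots,p_{n-1}$ generically, bounding the resulting geometric constants, and then pushing $p_n$ sufficiently extreme — effectively a two-scale construction where $p_n$ is taken after all other parameters are locked in.
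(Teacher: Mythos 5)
Your opening analysis is on target: with $\psi=\min(\ell,0)$ vanishing on the $Q_1$ side and the sign pattern of $x_{d'}$ from \cref{lem:vec}, the pairing $\gen{f_d^\perp,x_{d'}}$ does indeed reduce to a sum over $Q_2\setminus\{p_i,p_j\}$ in which the single bad term is $c_{p_{i'}}\ell(p_{i'})$. But the remedy you propose — pushing $p_n$ far away and reweighting so that $c_{p_n}\ell(p_n)$ dominates — has a scaling flaw that keeps it from going through as stated.

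The problem is that $c_{p_n}\ell(p_n)$ does not diverge as $p_n\to\infty$. Every contribution to $c_{p_n}$ comes from a basic relation attached to a crossing diagonal $\{p_m,p_n\}$ of $d'$ and has the form $|rp_m|/|p_mp_n|$, which decays like $1/|p_n|$ (the intersection point $r$ stays near the bounded segment $p_{i'}p_{j'}$). Meanwhile $|\ell(p_n)|$ grows only like $|p_n|$. So $|c_{p_n}\ell(p_n)|$ approaches a finite limit rather than blowing up, and whether that limit beats $|c_{p_{i'}}\ell(p_{i'})|$ depends on the finite geometry of $p_1,\dots,p_{n-1}$ and the escape direction — it is not forced by extremity alone. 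Reweighting does not rescue this: the basic relation for $\{p_m,p_n\}$ contributes to $c_{p_n}$ \emph{and} to the positive coefficient $c_{p_{i'}}$ in a fixed ratio, so scaling it up inflates both sides equally. The only freedom reweighting gives is to down-weight the diagonals not through $p_n$, which shrinks $c_{p_{i'}}$ but also destroys the strict sign pattern you are insisting on preserving (some $c_{p_k}$ become $0$), and in the limiting configuration you are again stuck with the same finite ratio.

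The paper avoids both issues. First, it relaxes to $\gen{f_d^\perp,x_{d'}}\geq 0$ in the non-crossing case and fixes this at the end by adding a small multiple of the \cref{lem:vec} construction; this legitimizes using a \emph{single} carefully chosen basic relation for each $x_{i,j}$ (the one pairing $\{i,j\}$ with the shifted diagonal $\{i-1,j-1\}$, with indices mod $n$), rather than a sum over all crossings. Second, and more importantly, the paper does not push a single vertex to infinity: it places the vertices on a parabola at $(y_k,y_k^2)$ with $y_n=1<y_1\ll y_2\ll\cdots\ll y_{n-1}$ and takes a \emph{nested} lacunary limit in which every successive vertex is far beyond the previous one. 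In that multi-scale limit the line $p_ip_j$ degenerates to a vertical line, $\psi$ approaches the step-like function $\min(y_i-u,0)$, and the pairing collapses to exactly two terms whose dominant one has the right sign precisely when $j'-i>1$. That hierarchical degeneration is what gives the uniformity over all pairs $(d,d')$ that your two-scale plan leaves unaddressed. I'd suggest reworking your argument around a single basic relation per $d'$, the $\geq 0$ relaxation, and a nested rather than one-shot limit.
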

\begin{proof}
Observe that by adding a small multiple of the $x_d$ constructed in \cref{lem:vec}, we may relax the condition to $\gen{f_d^\perp, x_{d'}} \geq 0$ in the case where $d$ and $d'$ don't cross.

Let $y_n = 1$ and consider real numbers $1 < y_1 < y_2 < \cdots < y_{n-1}$, where at the end of the proof we will take $y_1$ to be large, $y_2$ to be sufficiently large in terms of $y_1$, and so on until $y_{n-1}$ is sufficiently large in terms of $y_1,\ldots,y_{n-2}$. Call this limiting process the \vocab{lacunary limit}.

We choose $Q$ such that the $i$th vertex $p_i$ lies at $(y_i, y_i^2)$; as all the points lie on a parabola, the points $p_1,p_2,\ldots,p_n$ indeed form the vertices of a convex $n$-gon in that order. For $i \in [n-1]$, let
\[\ui = \begin{cases} i-1 & i > 1 \\ n & i = 1.\end{cases}\]
Then, for $2 \leq i \leq n-1$ take $x_{i,n} = 0$ and for $i < j < n$ with $j-i > 1$ take
\[x_{i,j} = \frac{\abs{rp_j}}{\abs{p_ip_j}} \cdot p_i + \frac{\abs{rp_i}}{\abs{p_ip_j}} \cdot p_j - \frac{\abs{rp_{\uj}}}{\abs{p_{\ui}p_{\uj}}} \cdot p_{\ui} - \frac{\abs{rp_{\ui}}}{\abs{p_{\ui}p_{\uj}}} \cdot p_{\uj} \in X_Q,\]
where $r$ is the intersection of the line segments $p_ip_j$ and $p_{\ui}p_{\uj}$. By the same argument as in \cref{lem:vec}, we have $\gen{f_d^\perp, x_{d'}} \geq 0$ if $d$ and $d'$ don't cross, so it suffices to show that in the lacunary limit, we have $\gen{f_{i,j}^\perp, x_{i',j'}} > 0$ if $i' < i < j' < j < n$ and $j'-i > 1$.

Fix such a choice of $i,j,i',j'$. We represent $f_{i,j}^\perp$ as $\psi = \min(\ell, 0)$, where $\ell$ is an affine function that is zero at $p_i$ and $p_j$ and negative at $p_{j'}$. In the limit $y_j \to \infty$, line $p_ip_j$ approaches a vertical line, so after appropriate scaling $\psi$ approaches the function $\psi_\infty(u, v) = \min(y_i - u, 0)$. Thus the condition $\gen{f_{i,j}^\perp, x_{i',j'}} > 0$ becomes
\[\gen{[\psi_\infty], x_{i',j'}} = \frac{\abs{rp_{i'}}}{\abs{p_{i'}p_{j'}}} (y_i - y_{j'}) - \frac{\abs{rp_{(i')^-}}}{\abs{p_{(i')^-}p_{(j')^-}}} (y_i - y_{(j')^-}) > 0,\]
where $r$ is the intersection point of $p_{i'}p_{j'}$ and $p_{(i')^-}p_{(j')^-}$. Now taking the limit $y_{j'}\to\infty$, we see that $\abs{rp_{i'}}$ and $\abs{rp_{(i')^-}}$ approach positive limits. Moreover, $(y_i-y_{j'})/\abs{p_ip_{j'}}$ approaches $0$, while $(y_i-y_{(j')^-})/\abs{p_{(i')^-}p_{(j')^-}}$ is constant, which is negative as $(j')^- > i$. Thus this is true in the lacunary limit, as desired.
\end{proof}

\begin{lem} \label{lem:swap}
Let $\mT' \subseteq \mT_n$ be such that for every $T \in \mT_n$ there is a $T' \in \mT'$ with $T \rsqa T'$. Then $\chi(\KG(\mT')) = n-2$.
\end{lem}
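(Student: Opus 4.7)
The upper bound $\chi(\KG(\mT')) \leq n-2$ is immediate since $\mT'$ is an induced subgraph of $\KG(\mT_n)$ and the explicit $(n-2)$-coloring from \cref{sec:intro} restricts. So the real content is the matching lower bound, and my plan is to adapt the topological argument $\text{\ref{item:vectors}} \implies \text{\ref{item:chi}}$ in the proof of \cref{lem:polybu}, using \cref{lem:lac} in place of a direct appeal to \cref{item:vectors}.

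Concretely, fix the convex $n$-gon $Q$ and vectors $x_d \in X_Q$ supplied by \cref{lem:lac}, put an arbitrary inner product on $X_Q$, rescale each $x_d$ to unit length, and let $u_d \in S^{n-4}$ be the corresponding unit vector (identifying $S^{n-4}$ with the unit sphere in $X_Q$). Suppose for contradiction that $\KG(\mT')$ admits a proper coloring with $n-3$ colors. For each color $i \in [n-3]$, define
\[A_i = \setmid*{w \in S^{n-4}}{\text{some $T' \in \mT'$ of color $i$ satisfies $\gen{w, u_{d'}} > 0$ for all $d' \in T'$}}.\]
These sets are open, and I aim to show they cover $S^{n-4}$; once this is done, the Lyusternik-Shnirelmann theorem (\cref{thm:ls}) produces $i$ and $w$ with $w, -w \in A_i$, yielding two triangulations $T', T'' \in \mT'$ of the same color with no common diagonal (if they shared some $d$, then $\gen{w,u_d} > 0$ and $\gen{-w,u_d} > 0$), which contradicts properness of the coloring.

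The only step that differs from \cref{lem:polybu} is the covering claim, and this is where the hypothesis on $\mT'$ enters. Given nonzero $w \in X_Q^*$, let $v_T$ be the vertex of $\Sigma_Q$ maximizing $\gen{w,-}$, so that \cref{prop:farkas} combined with \cref{thm:GKZ} writes $w = \sum_{d \in T} c_d f_d^\perp$ with $c_d \geq 0$ not all zero. By hypothesis there is some $T' \in \mT'$ with $T \rsqa T'$, i.e.\ $d \rsqa d'$ for every $d \in T$, $d' \in T'$. Then for each $d' \in T'$,
\[\gen{w, x_{d'}} = \sum_{d \in T} c_d \gen{f_d^\perp, x_{d'}} > 0\]
by the positivity in \cref{lem:lac} (each summand is nonnegative, and at least one is strictly positive because the corresponding $c_d$ is). Rescaling to unit length, $w/\norm{w} \in A_i$ where $i$ is the color of $T'$, completing the cover.

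The main step to be careful about is the last one: I need the $\rsqa$ hypothesis to hold against \emph{every} $d \in T$ simultaneously with the \emph{same} $T'$, which is exactly the definition of $T \rsqa T'$ at the level of triangulations and why \cref{lem:lac} is formulated to produce a single global choice of the $x_d$'s. Everything else is a straight transplant of the Lyusternik-Shnirelmann argument from \cref{lem:polybu}, applied to the $(n-3)$-dimensional polytope $\Sigma_Q$.
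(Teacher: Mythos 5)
Your proof is correct and follows the same strategy as the paper's: take the $x_d$ from \cref{lem:lac}, use \cref{prop:farkas} together with the hypothesis $T \rsqa T'$ to verify a Gale-type covering condition restricted to $\mT'$, and then invoke the Lyusternik--Schnirelmann theorem exactly as in the $\text{\ref{item:gale}} \implies \text{\ref{item:chi}}$ step of \cref{lem:polybu}. The paper leaves that last step as ``an argument similar to\ldots'' whereas you spell it out in full; the substance is identical.
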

\begin{proof}
We have $\chi(\KG(\mT')) \leq \chi(\KG(\mT_n)) \leq n-2$, so it suffices to prove a lower bound.

Take $x_d$ as in \cref{lem:lac}. For any nonzero $w \in X_Q^*$, take some vertex $v \in V(\Sigma_Q)$ maximizing $\gen{w, v}$. By \cref{prop:farkas}, if $v$ corresponds to a triangulation $T$ we may write $w$ as a conic combination of $f_d^\perp$ for $d \in T$. Taking $T' \in \mT'$ such that $T \rsqa T'$, we find that $\gen{w, x_d} > 0$ for every $d \in T'$. An argument similar to the proof of $\text{\ref{item:gale}} \implies \text{\ref{item:chi}}$ in \cref{lem:polybu} shows the bound.
\end{proof}

\subsection{Proof of \texorpdfstring{\cref{thm:star}}{Theorem \ref{thm:star}}} \label{sec:star}
We first show that deleting a star necessarily decreases the chromatic number.
\begin{lem} \label{lem:starconstruct}
If $T \in \mT_n$ is a star, then $\KG(\mT_n \setminus \set{T}) \leq n-3$.
\end{lem}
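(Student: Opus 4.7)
The plan is to exploit the dihedral symmetry of the $n$-gon to assume, without loss of generality, that $T = T_n$ is the star at vertex $n$, and then to exhibit an explicit proper coloring of $\mT_n \setminus \{T_n\}$ using $n-3$ colors. Specifically, I would color each $T' \in \mT_n \setminus \{T_n\}$ by some $i \in [n-3]$ such that $\{i, i+2\} \in T'$ (equivalently, such that the ear $\{i, i+1, i+2\}$ is a triangle of $T'$). This is automatically a proper coloring, since any two triangulations receiving the same color $i$ share the diagonal $\{i, i+2\}$ and are therefore non-adjacent in $\KG(\mT_n)$.

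The heart of the argument is to show this coloring is well-defined, i.e., that every $T' \ne T_n$ contains some ear $\{i, i+1, i+2\}$ with $i \in [n-3]$. Equivalently, I would prove the contrapositive: if every ear of $T'$ is among the three \emph{bad} ears $\{n-2, n-1, n\}$, $\{n-1, n, 1\}$, $\{n, 1, 2\}$, then $T' = T_n$. The standard count (every triangulation of an $n$-gon has at least two ears, by the identity $a = c+2$ relating ears to triangles with three diagonals) guarantees $T'$ has at least two bad ears. A direct geometric check shows that the diagonal $\{n-1, 1\}$ crosses both $\{n-2, n\}$ and $\{n, 2\}$, while the latter two share the vertex $n$, so the only admissible pair of bad ears is $\{n-2, n-1, n\}$ and $\{n, 1, 2\}$, forcing both $\{n-2, n\}$ and $\{n, 2\}$ to lie in $T'$.

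I would then conclude by induction on $n$. After removing the two ears above, the remaining diagonals of $T'$ triangulate the sub-polygon $Q'$ on vertices $\{2, 3, \ldots, n-2, n\}$, which is an $(n-2)$-gon with distinguished ``star center'' $n$. Any ear of this sub-triangulation either uses two sub-polygon edges that are also original polygon edges (hence is of the form $\{j, j+1, j+2\}$ for $2 \le j \le n-4$, contradicting our assumption since these $j$ lie in $[n-3]$), or else is one of the three corresponding bad ears of $Q'$, namely $\{n-3, n-2, n\}, \{n-2, n, 2\}, \{n, 2, 3\}$. By the inductive hypothesis applied to $Q'$, the sub-triangulation is the star at $n$ in $Q'$, so $T' = T_n$. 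The base case $n=4$ is immediate: the only non-star triangulation is the single-diagonal one containing the ear $\{1,2,3\}$.

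The main obstacle I anticipate is phrasing the inductive hypothesis so that ``bad ears'' are defined in a dihedrally equivariant way relative to an arbitrary distinguished vertex, so that the sub-polygon step of the recursion produces an instance of the same claim; once this framing is in place, the crossing analysis of bad-ear diagonals and the transfer of ears to the sub-polygon are essentially mechanical.
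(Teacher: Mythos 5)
Your proposal is correct, and its overall structure---the coloring by the ear diagonal $\{i,i+2\}$, the ear-counting argument forcing all ears of $T'$ into the three ``bad'' triangles at vertex $n$, and the elimination of the middle bad ear $\{n-1,n,1\}$---closely parallels the paper's proof. The two arguments diverge only in the final step showing that $T'$ must coincide with the star. The paper uses a propagation argument on the index $i$: having established that $T'$ contains exactly the two ears $\{n-2,n-1,n\}$ and $\{n,1,2\}$ (so every other triangle has exactly one polygon edge), it shows by induction on $i$ that $\{i,n\} \in T'$ for $2 \le i \le n-2$, since the triangle on the far side of $\{i,n\}$ must have third vertex $i+1$ or $n-1$, and $n-1$ is ruled out. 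You instead peel off the two ears to obtain a triangulation of the $(n-2)$-gon on $\{2,\ldots,n-2,n\}$ and recurse. Both are correct; the paper's propagation is a bit more self-contained because it avoids re-packaging the hypothesis for the sub-polygon, which is exactly the bookkeeping concern you flag. One small point: your induction reduces $n$ by $2$, so you should also name $n=3$ as a (vacuous) base case alongside $n=4$.
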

\begin{proof}
The result is trivial for $n = 3$, so assume otherwise. Without loss of generality assume that the central vertex of $T$ is $n$. It suffices to show that every triangulation $T' \neq T$ contains a diagonal of the form $\set{i,i+2}$ for $i \in [n-3]$ as we can then color $T'$ with an $i$ such that $\set{i, i+2} \in T'$ holds. To see this, observe that $T'$ contains $n-2$ triangles and each triangle contains up to two edges of the $n$-gon, meaning that $T'$ contains at least two triangles of the form $\set{i,i+1,i+2}$, where addition is taken modulo $n$. If $T'$ contains no diagonal of the form $\set{i,i+2}$ for $i\in [n-3]$, then $T'$ can only have two such triangles, which must be $\set{n-2,n-1,n}$ and $\set{n,1,2}$. The fact that there are no other such triangles implies that every other triangle in $T'$ contains exactly one edge of the $n$-gon.

We now claim by induction on $i$ that $\set{i,n}\in T'$ for all $2 \leq i \leq n-2$, which will show that $T' = T$, a contradiction. The base case $i=2$ is true since $\set{n,1,2}$ is a triangle in $T'$. Moreover, for a given $2 \leq i < n-2$, there must exist some $j > i$ such that $\set{i,j,n}$ is a triangle in $n$. The only way for this triangle to contain an edge of the $n$-gon is if $j=i+1$, in which case we are done, or if $j = n-1$, which contradicts the fact that $\set{n-2,n-1,n}$ is a triangle in $T'$. This concludes the proof.
\end{proof}

In light of \cref{lem:swap}, we know that $\chi(\KG(\mT_n \setminus \set{T})) = n-2$ for all $T \in \mT_n$ for which there is some $T' \in \mT_n$ with $T \neq T'$ and $T \rsqa T'$. This covers many non-star triangulations, and by allowing $T$ to be rotated or reflected (which doesn't change the chromatic number by symmetry) we can hit almost all non-star triangulations. However, this process still leaves one example that we have to deal with manually: the triangulation $\Delta = \set{\set{1,3},\set{1,5},\set{3,5}} \in \mT_6$ and its rotation $\Delta' = \set{\set{2,4},\set{2,6},\set{4,6}}$.

\begin{lem} \label{lem:swapmost}
Suppose $T \in \mT_n$ is neither a star, $\Delta$, nor $\Delta'$. Then there exists a rotation or reflection of $T$, denoted by $T'$, and some $T'' \in \mT_n$ such that $T' \neq T''$ and $T' \rsqa T''$.
\end{lem}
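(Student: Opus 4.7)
The plan is to produce $T''$ as a single-diagonal flip of a suitably rotated/reflected copy $T'$ of $T$. Given $d = \{X_1, X_3\} \in T$, let $X_2, X_4$ be the third vertices of the two triangles of $T$ containing $d$ (so $X_1, X_2, X_3, X_4$ appear in cyclic polygon order), and let $A_1, A_2, A_3, A_4$ be the four open polygon arcs between consecutive $X_k$'s. Since $T''$ will differ from $T'$ only in the flip $\{X_1, X_3\} \to \{X_2, X_4\}$, every pair $(d_1, d_2) \in T' \times T''$ other than $(d, \{X_2, X_4\})$ shares a triangulation and is thus non-crossing, making $d_1 \rsqa d_2$ automatic. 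So the only nontrivial requirement is $d \rsqa \{X_2, X_4\}$ under the chosen labeling, and placing the new top vertex $n^*$ in each of the four arcs with each of the two orientations yields eight subcases. I expect the computation to show that exactly four subcases work, each reducing to a single inequality $|A_m| \geq 1$ for the opposite arc; combining gives the criterion $(|A_1| \geq 1 \wedge |A_3| \geq 1) \vee (|A_2| \geq 1 \wedge |A_4| \geq 1)$, equivalently ``no two cyclically consecutive arcs $A_k$, $A_{k+1}$ are both empty''. Call $d$ \emph{good} when this holds; equivalently, $d$ is good iff neither triangle adjacent to $d$ in $T$ is an ear and both endpoints of $d$ are incident to at least two diagonals of $T$.

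It then suffices to show that every $T$ outside $\{\text{stars}\} \cup \{\Delta, \Delta'\}$ has a good diagonal. Split on whether $T$ contains an \emph{interior} triangle $t_0$ with three diagonal sides. If it does, every vertex of $t_0$ is incident to two of $t_0$'s diagonals and is thus complex. If some side of $t_0$ is not ear-adjacent, that side is good. Otherwise all three sides are ear-adjacent, so $t_0$ has three ears as neighbors, meaning the three sub-polygons cut off by $t_0$ must each be single triangles; their sizes sum to $n + 3$ (each vertex of $t_0$ lies in two of them) and each equals $3$, forcing $n = 6$ and $t_0 \in \{\{1,3,5\}, \{2,4,6\}\}$, i.e., $T \in \{\Delta, \Delta'\}$.

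If $T$ has no interior triangle, every triangle of $T$ has a polygon side and $T$ is a snake with dual path $t_1, \ldots, t_{n-2}$ and diagonals $d_1, \ldots, d_{n-3}$ in order. Define the pivot $v_k = d_{k-1} \cap d_k$ for $2 \leq k \leq n-3$; this is a single vertex (namely the vertex of the non-ear triangle $t_k$ opposite its polygon side), and it lies in all three of $t_{k-1}, t_k, t_{k+1}$ and hence is complex. If all pivots coincide, the common pivot lies in every triangle and $T$ is a star. Otherwise $v_k \neq v_{k+1}$ for some $k \in \{2, \ldots, n-4\}$, so $d_k = \{v_k, v_{k+1}\}$ has two complex endpoints and is non-ear-adjacent, hence good. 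Once a good diagonal $d$ has been identified, choosing one of the four valid $(n^*, \text{orientation})$ configurations around $d$ yields the required $T'$, and $T''$ is obtained by flipping $d$. The hardest part will be the arc-by-arc case analysis of the first paragraph, which is mechanical but requires careful bookkeeping of cyclic labels under the eight combinations of rotation and orientation.
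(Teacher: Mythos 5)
Your proof is correct, and it takes a genuinely different route from the paper's. Both arguments produce $T''$ as a single diagonal flip of $T'$ and therefore reduce to exhibiting a flippable diagonal, but they locate one very differently. The paper packages the flippability condition as a ``copy of $Z$'' (six vertices $i_1<\dots<i_6$ with four prescribed triangles, the flip being $\{i_2,i_5\}\rsqa\{i_1,i_4\}$) and proves existence by induction on $n$: peel off an ear at vertex $n$, apply the inductive hypothesis to $T^-\in\mT_{n-1}$, and when $T^-$ is a star, $\Delta$, or $\Delta'$ handle those cases by a short explicit check with pictures. You instead characterize flippable diagonals intrinsically --- your ``good'' diagonals (neither adjacent triangle is an ear and both endpoints lie on $\geq 2$ diagonals, equivalently no two cyclically consecutive arcs $A_k,A_{k+1}$ are both empty) --- and the criterion you arrive at is easily seen to coincide with the $Z$-copy condition once reflections are allowed; this matches my reading of the paper. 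Your existence argument is then non-inductive: either $T$ has an interior triangle (in which case either a non-ear-adjacent side of it is good, or a short count forces $n=6$ and $T\in\{\Delta,\Delta'\}$), or $T$ is a snake, in which case consecutive pivots $v_k=d_{k-1}\cap d_k$ either all coincide (giving a star) or some $d_k=\{v_k,v_{k+1}\}$ is good. I find the snake/pivot argument cleaner and more illuminating than the paper's case-splitting induction, and the ``good diagonal'' characterization is a nice standalone observation. The one place you hedge --- ``I expect the computation to show that exactly four subcases work'' --- is indeed the fiddly bookkeeping part, but the conclusion you state, namely $\bigl(|A_1|\geq 1 \wedge |A_3|\geq 1\bigr)\vee\bigl(|A_2|\geq 1\wedge |A_4|\geq 1\bigr)$, is correct: for each arc $A_m$, placing the new label $n$ there works for exactly one orientation and requires $|A_m|\geq 1$ (to place it) and $|A_{m+2}|\geq 1$ (for $j'-i>1$). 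So the proposal is sound, though you should carry out that verification explicitly in a final writeup.
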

\begin{proof}
It suffices to find a copy of the triangulation $Z$ from \cref{sec:stabmotiv} in $T$, i.e.\ (not necessarily consecutive) indices $i_1,i_2,i_3,i_4,i_5,i_6 \in [n]$, going around the $n$-gon in that order, such that $\set{i_2,i_3,i_4}$, $\set{i_2,i_4,i_5}$, $\set{i_1, i_2, i_5}$, and $\set{i_1, i_5, i_6}$ are triangles in $T$. If this is the case, we may rotate or reflect $T$ so that $i_1 < i_2 < i_3 < i_4 < i_5 < i_6 = n$, and let $T''$ be the triangulation obtained by swapping $\set{i_2, i_5}$ for $\set{i_1, i_4}$. 

To find this copy, we proceed by induction, with the base cases of $n \leq 5$ being vacuous as all triangulations are stars. Given a triangulation $T \in \mT_n$ for $n \geq 6$, we know there exists a triangle of the form $\set{i,i+1,i+2}$ in $T$; without loss of generality assume that $i = n-1$, so that deleting vertex $n$ yields a triangulation $T^- \in \mT_{n-1}$. If $T^-$ contains a $Z$-copy we are done. Otherwise, by the inductive hypothesis there are a few cases for $T^-$.

\begin{figure}[t] 
\centering
\begin{tikzpicture}[scale=1.6,s0/.style={thick,line cap=round,line join=round},s1/.style={s0,red},s2/.style={s0,blue,dashed},s3/.style={s0,green!70!black,dotted},s4/.style={s0,violet,dash dot dot},every label/.append style={label distance=1pt,font=\small}]
\coordinate (a1) at (180:1);
\coordinate (a2) at (-150:1);
\coordinate (a3) at (-120:1);
\coordinate (a4) at (-60:1);
\coordinate (a5) at (0:1);
\coordinate (a6) at (0,0.5);
\fill[red!25] (a1)--(a2)--(a3)--(a4)--(a5)--(a6)--cycle;
\draw[s1] (a5)--(a1)--(a4)--(a2);
\draw (a3)--(a4)--(a5);
\draw (a1) node[dot,label={180:$1$}]{} --(a2) node[dot,label={-150:$2$}]{} --(a3) node[dot,label={-120:$3$}]{} arc (-120:-60:1) node[dot,label={-60:$i$}]{} arc (-60:0:1) node[dot,label={0:$n-1$}]{} -- (a6) node[dot,label={90:$n$}]{} --cycle;
\begin{scope}[xshift=3cm]
\coordinate (a1) at (180:1);
\coordinate (a2) at (-120:1);
\coordinate (a3) at (-60:1);
\coordinate (a4) at (-30:1);
\coordinate (a5) at (0:1);
\coordinate (a6) at (0,0.5);
\fill[red!25] (a1)--(a2)--(a3)--(a4)--(a5)--(a6)--cycle;
\draw[s1] (a1)--(a5)--(a2)--(a4);
\draw (a1)--(a2)--(a3);
\draw (a1) node[dot,label={180:$1$}]{} arc (-180:-120:1) node[dot,label={-120:$i$}]{} arc (-120:-60:1) node[dot,label={-60:$n-3$}]{} --(a4) node[dot,label={-30:$n-2$}]{} --(a5) node[dot,label={0:$n-1$}]{} --(a6) node[dot,label={90:$n$}]{} --cycle;\end{scope}
\begin{scope}[yshift=-2cm]
\coordinate (a1) at (180:1);
\coordinate (a2) at (-144:1);
\coordinate (a3) at (-108:1);
\coordinate (a4) at (-72:1);
\coordinate (a5) at (-36:1);
\coordinate (a6) at (0:1);
\coordinate (a7) at (0,0.5);
\fill[red!25] (a1)--(a3)--(a4)--(a5)--(a6)--(a7)--cycle;
\draw[s1] (a6)--(a1)--(a5)--(a3);
\draw (a1)--(a3);
\draw (a1) node[dot,label={180:$1$}]{} --(a2) node[dot,label={-144:$2$}]{} --(a3) node[dot,label={-108:$3$}]{} --(a4) node[dot,label={-72:$4$}]{} --(a5) node[dot,label={-36:$5$}]{} --(a6) node[dot,label={0:$6$}]{} --(a7) node[dot,label={90:$7$}]{} --cycle;\end{scope}
\begin{scope}[xshift=3cm,yshift=-2cm]
\coordinate (a1) at (180:1);
\coordinate (a2) at (-144:1);
\coordinate (a3) at (-108:1);
\coordinate (a4) at (-72:1);
\coordinate (a5) at (-36:1);
\coordinate (a6) at (0:1);
\coordinate (a7) at (0,0.5);
\fill[red!25] (a1)--(a2)--(a3)--(a4)--(a6)--(a7)--cycle;
\draw[s1] (a1)--(a6)--(a2)--(a4);
\draw (a4)--(a6);
\draw (a1) node[dot,label={180:$1$}]{} --(a2) node[dot,label={-144:$2$}]{} --(a3) node[dot,label={-108:$3$}]{} --(a4) node[dot,label={-72:$4$}]{} --(a5) node[dot,label={-36:$5$}]{} --(a6) node[dot,label={0:$6$}]{} --(a7) node[dot,label={90:$7$}]{} --cycle;\end{scope}
\end{tikzpicture}
\caption{The cases in the proof of \cref{lem:swapmost}, with the copy of $Z$ shaded in red.} \label{fig:z}
\end{figure}
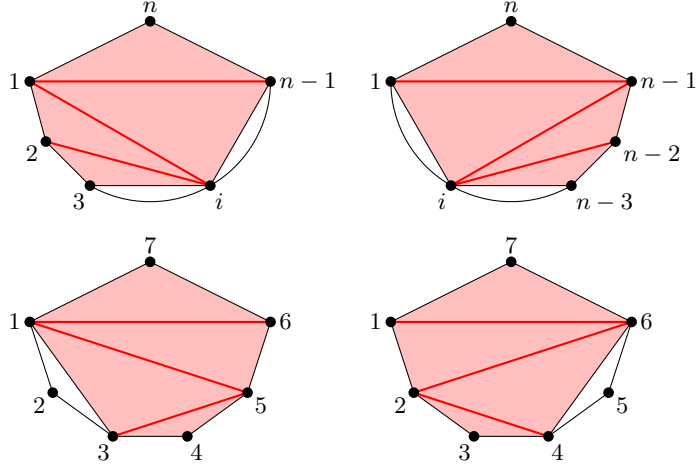
If $T^-$ is a star, then since $T$ is not a star the central vertex $i$ of $T^-$ cannot be $1$ or $n-1$. If $i \geq 4$, then vertices $n-1,i,3,2,1,n$ form a $Z$. If $i \leq n-4$, then vertices $1,i,n-3,n-2,n-1,n$ form a $Z$. This covers all cases except when $n = 6$ and $i = 3$, in which case we have $T = \Delta$, a contradiction. If $T^- = \Delta$, then $6,5,4,3,1,7$ form a $Z$. If $T^- = \Delta'$, then $1,2,3,4,6,7$ form a $Z$. These cases are shown in \cref{fig:z}. Having exhausted all cases, we are done.
\end{proof}

Finally, we deal with $\Delta$ and $\Delta'$. While this is a finite statement and can even be proven with brute force, we present a topological argument that may be useful for inspiring generalizations of \cref{lem:swap}.

\begin{prop} \label{prop:delta}
$\chi(\KG(\mT_6 \setminus \set{\Delta})) = 4$. \label{prop:all_ears}
\end{prop}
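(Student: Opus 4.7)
The plan is to imitate the proof of \cref{lem:swap}, with an additional step to handle the single missing vertex $v_\Delta$. Suppose for contradiction that $c\colon\mT_6\setminus\set{\Delta}\to[3]$ is a proper 3-coloring. Fix a convex hexagon $Q$ and vectors $x_d$ as in \cref{lem:lac}, and define the open sets
\[A_i=\setmid*{w\in S^2}{\exists\,T\in\mT_6\setminus\set{\Delta},\ c(T)=i,\ \gen{w,x_d}>0\text{ for all }d\in T},\qquad i\in[3].\]
Whenever the maximizer of $\gen{w,\cdot}$ on $\Sigma_Q$ is some $v_T$ with $T\neq\Delta$, the relation $T\rsqa T$ witnesses $w\in A_{c(T)}$, so $A_1\cup A_2\cup A_3$ already covers the complement of the open spherical triangle $S_0\subseteq S^2$ obtained as the interior of the normal cone of $v_\Delta$. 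If I can cover $S_0$ as well, the Lyusternik--Shnirelmann theorem yields $w$ with $w,-w\in A_i$ for some $i$, producing two non-$\Delta$ triangulations of color $i$ sharing no diagonal, a contradiction.

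To cover $S_0$, I would rely on the three edges of $\Sigma_Q$ incident to $v_\Delta$, which flip $\Delta$ into the ``odd'' stars $\text{star}_1,\text{star}_3,\text{star}_5$. Parameterizing a point $w\in S_0$ as $w=c_1 f_{\set{1,3}}^\perp+c_2 f_{\set{1,5}}^\perp+c_3 f_{\set{3,5}}^\perp$ with $c_i>0$, the condition $\gen{w,x_d}>0$ for all $d\in\text{star}_j$ collapses to the single inequality $\gen{w,x_{d_j}}>0$, where $d_j$ is the unique diameter in $\text{star}_j$ (so $d_1=\set{1,4}$, $d_3=\set{3,6}$, $d_5=\set{2,5}$): the two remaining diagonals of $\text{star}_j$ lie in $\Delta$, and any two diagonals of $\Delta$ are non-crossing and thus $\rsqa$-related, so those pairings are already positive on $S_0$ for free. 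It therefore suffices to show that for each $w\in S_0$, at least one of $\gen{w,x_{\set{1,4}}}$, $\gen{w,x_{\set{2,5}}}$, $\gen{w,x_{\set{3,6}}}$ is strictly positive.

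The main obstacle is exactly this last covering claim, and it is sensitive to the shape of $Q$: under \cref{lem:vec}'s construction on a regular hexagon one computes $x_{\set{1,4}}+x_{\set{2,5}}+x_{\set{3,6}}=0$, so the three half-spaces miss the symmetric direction $c_1=c_2=c_3$ inside $S_0$. For an asymmetric $Q$---such as the lacunary hexagon in \cref{lem:lac}---I would verify the sufficient condition $\gen{f_{d'}^\perp,\,x_{\set{1,4}}+x_{\set{2,5}}+x_{\set{3,6}}}>0$ for each $d'\in\Delta$, which forces $\sum_j\gen{w,x_{d_j}}>0$ on the positive orthant and thereby covers $S_0$. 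Of the nine pairings in this sum, six are positive by the non-crossing case of $\rsqa$; only the three ``transverse'' pairings $\gen{f_{\set{1,3}}^\perp,x_{\set{2,5}}}$, $\gen{f_{\set{1,5}}^\perp,x_{\set{3,6}}}$, $\gen{f_{\set{3,5}}^\perp,x_{\set{1,4}}}$ are a priori of unknown sign, and the lacunary asymmetry is precisely what keeps them under control. This explicit estimate is the one substantive calculation; the surrounding topological framework is then purely formal, and its shape---covering $S_0$ via several local ``swap targets'' instead of a single one---suggests how \cref{lem:swap} might be generalized.
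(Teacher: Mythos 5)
Your topological scaffolding is correct and matches the paper's: reduce to covering the open normal cone $S_0$ of $v_\Delta$, and since the two diagonals of $\text{star}_j$ lying in $\Delta$ are non-crossing with every $d'\in\Delta$, the cover reduces to showing that for each $w\in S_0$ at least one of $\gen{w,x_{\{1,4\}}}$, $\gen{w,x_{\{2,5\}}}$, $\gen{w,x_{\{3,6\}}}$ is positive. You also correctly identify the degeneracy: for the regular hexagon with the symmetric \cref{lem:vec} construction, $x_{\{1,4\}}+x_{\{2,5\}}+x_{\{3,6\}}=0$, so nothing is positive at $c_1=c_2=c_3$.

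The gap is in the proposed fix. You say you would use the lacunary hexagon with the $x_d$ from \cref{lem:lac} and verify $\gen{f_{d'}^\perp,\,x_{\{1,4\}}+x_{\{2,5\}}+x_{\{3,6\}}}>0$ for each $d'\in\Delta$, but this fails. In \cref{lem:lac}'s construction $x_{\{3,6\}}=0$ (it is one of the $x_{i,n}$), the coefficient of $p_2$ in $x_{\{1,4\}}$ is zero, and the coefficient of $p_2$ in $x_{\{2,5\}}$ is $\abs{rp_5}/\abs{p_2p_5}>0$. Since $f_{\{1,3\}}^\perp$ is a positive multiple of $[-\one_2]$, one gets $\gen{f_{\{1,3\}}^\perp,\,x_{\{1,4\}}+x_{\{2,5\}}+x_{\{3,6\}}}<0$ before perturbation, and the $O(\varepsilon)$ correction from \cref{lem:vec} cannot flip a sign bounded away from zero. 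More directly: taking $\lambda_2\gg\lambda_4\gg\lambda_6>0$ makes $\gen{w,x_{\{1,4\}}}\approx -\lambda_4\tfrac{\abs{rp_1}}{\abs{p_1p_4}}<0$, $\gen{w,x_{\{2,5\}}}\approx -\lambda_2\tfrac{\abs{rp_5}}{\abs{p_2p_5}}<0$, and $\gen{w,x_{\{3,6\}}}\approx 0$, so a corner of $S_0$ near $f_{\{1,3\}}^\perp$ stays uncovered. Your statement that ``the lacunary asymmetry is precisely what keeps them under control'' is asserted but not checked, and is false for this choice.

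The paper resolves the degeneracy in the opposite way: it \emph{keeps} the regular hexagon (so that $f_{\{1,3\}}^\perp$, $f_{\{1,5\}}^\perp$, $f_{\{3,5\}}^\perp$ are represented cleanly by $-\one_2$, $-\one_6$, $-\one_4$, and $w=[-(\lambda_2\one_2+\lambda_4\one_4+\lambda_6\one_6)]$ has a transparent coordinate form), and instead replaces the three long-diagonal vectors $x_{i,i+3}$ with asymmetric conic combinations from the \cref{lem:vec} family, weighting the two crossing contributions $6{:}2$ rather than $1{:}1$. A one-line computation then shows $\gen{w,x_{\{1,4\}}}=3(\lambda_2-\lambda_4+\lambda_6)>0$ whenever $\lambda_4\le\lambda_2,\lambda_6$, with the other two cases following by cyclic symmetry. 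Your idea of tilting $Q$ rather than the weights could in principle be made to work, but it needs a \emph{specific}, verified hexagon, and it loses the clean $-\one_i$ description of the outward normals that makes the paper's computation trivial.
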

\begin{proof}
We have $\chi(\KG(\mT_6\setminus \set{\Delta})) \leq \chi(\KG(\mT_6)) \leq 4$, so it suffices to prove a lower bound.

Let $Q$ be a regular hexagon with vertices $p_1,p_2,\ldots,p_6$. As in the proof of \cref{lem:swap}, it suffices to find $x_d \in X_Q$ for $d \in \Diag_6$ such that for every nonzero $w \in X_Q^*$, there is some $T \in \mT_6$ not equal to $\Delta$ such that $\gen{w, x_d} > 0$ for every $d \in T$.

For $i \in \set{1,3,5}$, define
\begin{align*}
    x_{i,i+3} &= 5p_i - 3p_{i+1} - p_{i+2} + 3p_{i+3} - p_{i+4} - 3p_{i+5} \\
    &= 6 \left(\frac{3}{4} p_i + \frac{1}{4} p_{i+3} - \frac{1}{2} p_{i+1} - \frac{1}{2} p_{i+5}\right)
    + 2\paren*{ \frac{1}{4} p_i + \frac{3}{4} p_{i+3} - \frac{1}{2} p_{i+2} - \frac{1}{2} p_{i+4}} \in X_Q.
\end{align*}
(indices taken modulo $6$) and choose the other $x_{d}$ according to \cref{lem:vec}. Observe that this choice is a special case of the construction in \cref{lem:vec}, so we still have $\gen{f_d^\perp, x_{d'}} > 0$ for any $d, d'$ that don't cross.

Now let $w \in X^\ast_Q$ be an arbitrary nonzero vector. By \cref{prop:farkas}, $w$ is a conic combination of $(f_d^\perp)_{d \in T'}$ for some $T' \in \mT_6$. If $T' \neq \Delta$, then we may take $T = T'$. Otherwise, observe that $f_{1,3}^\perp$, $f_{1,5}^\perp$, and $f_{3,5}^\perp$ can be represented as $-\one_2$, $-\one_6$, and $-\one_4$, respectively, where $\one_i$ denotes the indicator function of $p_i$. Thus we may write $w = [-(\lambda_2\one_2+\lambda_4\one_4+\lambda_6\one_6)]$ for some $\lambda_2,\lambda_4,\lambda_6 \geq 0$. Without loss of generality assume $\lambda_4 \leq \lambda_2,\lambda_6$, which implies that
\[\gen{w, x_{1,4}} = 3\lambda_2 + 3\lambda_6 - 3\lambda_4 \geq 3\max(\lambda_2,\lambda_6) > 0.\]
Since $\set{1,3},\set{1,5} \in \Delta$, we may therefore take $T = \set{\set{1,3},\set{1,4},\set{1,5}}$.
\end{proof}

\begin{proof}[Proof of \cref{thm:star}] 
This follows from combining \cref{lem:swap}, \cref{lem:starconstruct}, \cref{lem:swapmost}, and \cref{prop:delta}.
\end{proof}

\begin{rmk}
    Curiously, the colorings defined in  \cref{sec:intro} and \cref{lem:starconstruct} satisfy the stronger condition that in each color class all the triangulations share one triangle. Hence, if one were to define adjacency by not sharing a triangle, \cref{thm:main}, \cref{thm:stab}, and \cref{thm:star} would all still hold.  
\end{rmk}

\subsection{Proof of \texorpdfstring{\cref{thm:stab}}{Theorem \ref{thm:stab}}} \label{sec:fib}
In this section we will construct a set $\mT^{(3)}_n$ satisfying the conditions of \cref{lem:swap}. Unfortunately, this set is a bit difficult to define, so to do so we will switch to a different language.

Define a \vocab{parenthesization} of an ordered list of symbols $\sigma_1, \sigma_2, \ldots, \sigma_m$ to be a way to insert parentheses into the formal product $\sigma_1\sigma_2 \cdots \sigma_n$ such that it can be interpreted as $n-1$ applications of a binary product. For instance, there are $5$ parenthesizations of $\sigma_1, \sigma_2, \sigma_3, \sigma_4$, given by $\sigma_1(\sigma_2(\sigma_3\sigma_4))$, $\sigma_1((\sigma_2\sigma_3)\sigma_4)$, $(\sigma_1 \sigma_2)(\sigma_3 \sigma_4)$, $(\sigma_1(\sigma_2\sigma_3))\sigma_4$, and $((\sigma_1\sigma_2)\sigma_3)\sigma_4$.

We now recursively define the notion of a \vocab{$k$-parenthesization}. A $0$-parenthesization is a parenthesization with a single symbol. For an odd positive integer $k \geq 1$, a $k$-parenthesization is a parenthesization of the form $\pi_1 (\pi_2 \cdots (\pi_{\ell-1}(\pi_{\ell} \sigma)) \cdots )$ for some $\ell \geq 0$, symbol $\sigma$, and $(k-1)$-parenthesizations $\pi_1, \ldots, \pi_\ell$. If $k \geq 2$ is even, a $k$-parenthesization is a parenthesization of the form $(\cdots ((\sigma \pi_1) \pi_2) \cdots \pi_{\ell-1})\pi_\ell$ for some $\ell \geq 0$, symbol $\sigma$, and $(k-1)$-parenthesizations $\pi_1, \ldots, \pi_\ell$.

It is well-known that triangulations in $\mT_n$ are in bijection with parenthesizations of $n-1$ symbols $\sigma_1,\sigma_2,\ldots,\sigma_{n-1}$. One such bijection, which we will use for the remainder of this section, associates a triangulation with the unique parenthesization where, for all triangles $\set{i,j,k}$ in $T$ with $i < j < k$, the parenthesization obtains $\sigma_i \sigma_{i+1} \cdots \sigma_{k-1}$ from multiplying $\sigma_i \sigma_{i+1} \cdots \sigma_{j-1}$ and $\sigma_j \sigma_{j+1} \cdots \sigma_{k-1}$. Let $\smash{\mT^{(k)}_n \subseteq \mT_n}$ be the set of triangulations corresponding to $k$-parenthesizations under the aforementioned bijection.

\begin{figure}
\centering
\begin{tikzpicture}[scale=0.5,every label/.append style={font=\footnotesize}]
\draw[fill=green!30] (1,0)--(19,0) arc (0:180:9 and 7);
\draw[fill=blue!15] (19,0) arc (0:180:1.5) arc (0:180:5) (5,0) arc (0:180:2);
\draw[fill=red!25] (19,0) arc (0:180:1) (15,0) arc (0:180:2.5) arc (0:180:1) (4,0) arc (0:180:1);
\draw (16,0) arc (0:90:6 and 7) (10,7) arc (90:180:5 and 7) (10,7) arc (90:180:4 and 7);	
\draw (4,0) arc (0:180:1.5);
\draw (15,0) arc (0:180:4.5) (15,0) arc (0:180:1) (15,0) arc (0:180:1.5) (15,0) arc (0:180:2) (15,0) arc (0:180:2.5) arc (0:180:2) (8,0) arc (0:180:1);
\draw (19,0) arc (0:180:1);
\draw[thick] (1,0) node[dot,label={-90:$1$}]{} --
(2,0) node[dot,label={-90:$2$}]{} --
(3,0) node[dot,label={-90:$3$}]{} --
(4,0) node[dot,label={-90:$4$}]{} --
(5,0) node[dot,label={-90:$5$}]{} --
(6,0) node[dot,label={-90:$6$}]{} --
(7,0) node[dot,label={-90:$7$}]{} --
(8,0) node[dot,label={-90:$8$}]{} --
(9,0) node[dot,label={-90:$9$}]{} --
(10,0) node[dot,label={-90:$10$}]{} --
(11,0) node[dot,label={-90:$11$}]{} --
(12,0) node[dot,label={-90:$12$}]{} --
(13,0) node[dot,label={-90:$13$}]{} --
(14,0) node[dot,label={-90:$14$}]{} --
(15,0) node[dot,label={-90:$15$}]{} --
(16,0) node[dot,label={-90:$16$}]{} --
(17,0) node[dot,label={-90:$17$}]{} --
(18,0) node[dot,label={-90:$18$}]{} --
(19,0) node[dot,label={-90:$19$}]{} arc (0:90:9 and 7)
node[dot,label={90:$20$}]{} arc (90:180:9 and 7);
\end{tikzpicture}
\caption{An element of $\mT^{(3)}_{20}$. Regions corresponding to $1$-, $2$-, and $3$-parenthesizations are shaded in red, blue, and green, respectively.} \label{fig:t320}
\end{figure}
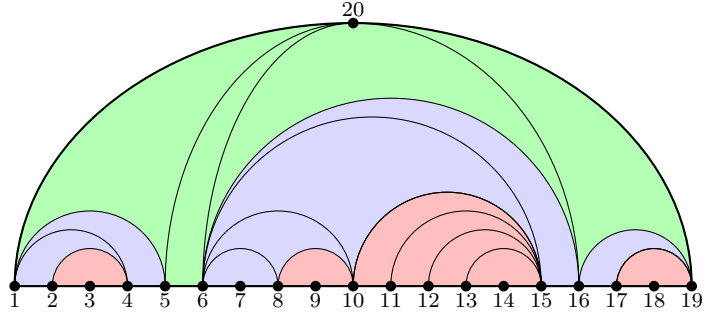
\begin{examp}
An example triangulation in $\mT^{(3)}_{20}$ is shown in \cref{fig:t320}, corresponding to the $3$-parenthesization $\tau_1(\tau_2(\tau_3(\tau_4 \sigma_{19})))$
where the $2$-parenthesizations $\tau_1,\tau_2,\tau_3,\tau_4$ are given by
\begin{align*}
\tau_1 &= (\sigma_1(\sigma_2 \sigma_3))\sigma_4 \\
\tau_2 &= \sigma_5 \\
\tau_3 &= \Bigl(\bigl((\sigma_6\sigma_7) (\sigma_8\sigma_9)\bigr)\bigl(\sigma_{10}(\sigma_{11}(\sigma_{12}(\sigma_{13}\sigma_{14})))\bigr)\Bigr) \sigma_{15} \\
\tau_4 &= \sigma_{16}(\sigma_{17}\sigma_{18}).
\end{align*}
Visually, $1$-parenthesizations correspond to stars (shaded in red) with edges emanating from the rightmost vertex. $2$-parenthesizations correspond to triangulations formed from chains of stars joined by triangles incident to the leftmost vertex (shaded in blue). $3$-parenthesizations correspond to chains of these chains, joined by triangles incident to vertex $n$ (shaded in green).
\end{examp}
\begin{lem} \label{lem:s3}
For every triangulation $T \in \mT_n$ there exists a triangulation $T' \in \mT^{(3)}_n$ such that $T \rsqa T'$.
\end{lem}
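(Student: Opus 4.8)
The plan is to construct a suitable $T'\in\mT^{(3)}_n$ by matching the two outer ``fans'' of a $3$-parenthesization to $T$, after which the innermost pieces become forced.

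\emph{Step 1: reduce to sub-polygons via the fan at $n$.} Let $1=c_0<c_1<\dots<c_r<c_{r+1}=n-1$ be the vertices joined to $n$ in $T$, so that $T$ contains the triangles $\{c_{j-1},c_j,n\}$ and is cut by them into triangulations $T_j$ of the polygons $R_j=\{c_{j-1},c_{j-1}+1,\dots,c_j\}$, glued along the chords $\{c_{j-1},c_j\}$. First I would note that if $T\rsqa T'$ then $T'$ must have exactly the same diagonals at $n$ as $T$: a diagonal $\{x,n\}$ of one but not the other is crossed by a diagonal of the other interleaving it in a way forbidden by $\rsqa$ (e.g.\ if $\{x,n\}\in T$ then its larger endpoint is $n$, which already violates $\rsqa$). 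So I set $T'$ to contain $T$'s fan at $n$; then $T'$ and $T$ also share the chords $\{c_{j-1},c_j\}$, and since $n$-diagonals and chords cross nothing while distinct $R_j$ meet in at most a vertex, any crossing pair $d\in T$, $d'\in T'$ lies inside a single $R_j$. Hence it suffices to produce, for each $j$, a triangulation $T'_j$ of $R_j$ that is a $2$-parenthesization relative to $R_j$ (a fan at the leftmost vertex $c_{j-1}$, each resulting sub-region filled by the star at its rightmost vertex) with $d\rsqa d'$ for all $d\in T_j$ and $d'\in T'_j$; then $T'=(\text{$n$'s fan})\cup\bigcup_j T'_j$ is a $3$-parenthesization with $T\rsqa T'$. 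Also, every diagonal inside $R_j$ has larger endpoint $\le c_j<n$, so the ``$<n$'' clause of $\rsqa$ is automatic there, and the relation I actually must verify inside $R_j$ is the weakened $\rsqa'$: \emph{$d\rsqa' d'$ iff $d,d'$ are disjoint, or $\min d'<\min d<\max d'<\max d$ and $\max d'\ge\min d+2$.}

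\emph{Step 2: the key lemma.} I would prove: for any convex polygon $Q$ on vertices $1<\dots<N$ and any triangulation $S$ of $Q$, let $Y\subseteq\{3,\dots,N-1\}$ be the set of vertices occurring as the larger endpoint of some diagonal of $S$, and let $S'$ be the $2$-parenthesization of $Q$ whose fan at vertex $1$ uses the diagonals $\{1,y\}$ ($y\in Y$) and whose sub-regions are filled by stars at their rightmost vertices; then $S\rsqa' S'$. The structural crux is that $Y$ is exactly the complement, inside $\{3,\dots,N-1\}$, of the union of the open interiors of the sub-regions of $S'$'s fan. Consequently no diagonal of $S$ has its larger endpoint in a sub-region interior, which yields two things: inside each sub-region $S$ already coincides with the star at the rightmost vertex (so $S$ and $S'$ agree there), and no diagonal of $S$ can cross a diagonal of $S'$ ``from the left'' (such a crossing would place a larger endpoint of an $S$-diagonal in a sub-region interior). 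The only crossings left are ``from the right'': a diagonal $d=\{p,q\}\in S$ with $p<y<q$ crossing a fan diagonal $\{1,y\}$ or a star/chord diagonal $\{v,y\}$ of $S'$ with $y\in Y$. There $\min d'<\min d<\max d'<\max d$ is immediate, and the only way to fail $\rsqa'$ would be $p=y-1$; but $y\in Y$ means $y$ is the larger endpoint of a diagonal $\{x,y\}\in S$, which traps $y-1$ on the side of $\{x,y\}$ containing no vertex exceeding $y$, so $\{y-1,q\}\notin S$. (The degenerate cases $N\le3$ and $Y=\emptyset$ give $S'=S$ outright.) Applying this to each $(R_j,T_j)$ supplies the $T'_j$ needed in Step 1.

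\emph{Step 3: assemble.} Combine the $T'_j$ with $T$'s fan at $n$ and observe that this is precisely the recursive description of a $3$-parenthesization (the fan at the last vertex $n$ with a $2$-parenthesization on each sub-polygon), so $T'\in\mT^{(3)}_n$ and $T\rsqa T'$.

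I expect Step 2 to be the real obstacle, and within it the hard part is guessing the right $S'$. Copying $S$'s own fan at $c_{j-1}$ fails: it can leave sub-regions inside which $S$ is not a star, hence not $\rsqa'$-below the forced level-$0$ star; taking the full star at $c_{j-1}$ is far too rigid. Fanning to exactly the larger endpoints of $S$-diagonals is the unique choice that makes the ``no left crossing'' condition hold with no slack while keeping $S$ a star inside every sub-region; granting that, the verification is the short case analysis above, whose only delicate input is the trapping argument that rules out the $\max d'=\min d+1$ exception.
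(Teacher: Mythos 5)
Your construction is the same as the paper's: you copy $T$'s fan at $n$, fan each sub-polygon from its leftmost vertex to exactly the larger endpoints of $T$'s diagonals there (the paper's sets $J_i$), and fill the remaining pieces with stars at their rightmost vertices, then verify $\rsqa$ via the same two observations (no crossing can place a larger endpoint of a $T$-diagonal strictly inside a sub-sub-region, and the ``trapping'' argument that rules out the $\max d' = \min d + 1$ case). The proof is correct and differs from the paper's only in packaging, namely in isolating the per-sub-polygon statement as a separate lemma.
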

\begin{proof}
Let $1 = a_1 < a_2 < \cdots < a_\ell = n-1$ be the vertices connected to $n$ by a side or a diagonal in $T$. Note that $\set{a_i, a_{i+1}}$ must either be a side or a diagonal. For $i \in [\ell-1]$, let $J_i$ consist of the $j\in [a_{i}+2, a_{i+1}-1]$ such that there is a diagonal in $T$ connecting $p_j$ to $p_{j'}$ for some $j' < j$. We may now construct $T'$ as consisting of the following diagonals:
\begin{enumerate}
\item $\set{a_i, n}$ for $2 \leq i \leq \ell-1$,
\item $\set{a_i, a_{i+1}}$ for $i \in [\ell-1]$,
\item $\set{a_i, j}$ for all $i \in [\ell-1]$ and $j \in J_i$, and
\item $\set{j-1, j'}$ for all $i \in [\ell-1]$ and $j \in [a_{i}+2, a_{i+1}-1] \setminus J_i$, where $j'$ is the minimum element of $J_i \cup \set{a_{i+1}}$ greater than $j$.
\end{enumerate}
This determines an element of $\mT^{(3)}_n$, where the $a_i$ determine the breaks between the $2$-parenthesizations and the elements of $J_i$ determine the breaks between the $1$-parenthesizations. For example, if $n = 20$, $(a_1,a_2,a_3,a_4,a_5) = (1,5,6,16,19)$, and $(J_1, J_2, J_3, J_4) = (\set{4}, \emptyset, \set{8,10,15}, \emptyset)$, then $T'$ is the triangulation shown in \cref{fig:t320}.

Every diagonal in $T$ is either in $T'$ by virtue of item (1) or (2) (and thus cannot cross any diagonal in $T'$) or is $\set{j, j'}$ for some $i \in [\ell-1]$ and $\set{j, j'} \neq \set{a_i, a_{i+1}}$ with $a_i \leq j < j' \leq a_{i+1}$. In this latter case, suppose it crosses a diagonal $\set{a_i, k}$ described in item (3). Since $k \in J_i$, we must have $k - j > 1$, but then we indeed have $\set{j,j'} \rsqa \set{a_i, k}$. Moreover, if this diagonal crosses a diagonal $\set{k, k'}$ of type (4), then since $j' \in J_i \cup \set{a_{i+1}}$ we cannot have $j < k < j' < k'$, so we must have $k < j < k' < j'$. This implies $k' \in J_i$, so $k' - j > 1$ and therefore $\set{j,j'} \rsqa \set{k,k'}$.
\end{proof}
\begin{rmk} \label{rmk:nobetter}
In fact, it is not difficult to show that for any $T \in \mT^{(3)}_n$, there does not exist any $T' \neq T$ such that $T \rsqa T'$. Thus \cref{lem:s3} is the best possible result of this form.
\end{rmk}
\begin{prop} \label{prop:count}
$\abs{\mT^{(3)}_n} = F_{2n-5}$.
\end{prop}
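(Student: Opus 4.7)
The plan is to set up a generating function for each $k$, extract from it a short linear recurrence for $|\mT^{(3)}_n|$, and check that it matches a classical Fibonacci identity with the correct initial data.

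Let $t^{(k)}_m$ denote the number of $k$-parenthesizations of $m$ symbols, so the target becomes $t^{(3)}_{n-1} = F_{2n-5}$. The recursive definition says that for $k \geq 1$, every $k$-parenthesization decomposes uniquely as one distinguished symbol together with an ordered (possibly empty) sequence of $(k-1)$-parenthesizations; the sequence sits on one side or the other depending on the parity of $k$, but this is immaterial for counting, as the total number of symbols is $1 + \sum_i |\pi_i|$ in both cases. Setting $T^{(k)}(x) := \sum_{m \geq 1} t^{(k)}_m x^m$, this yields the uniform recursion
\[
  T^{(k)}(x) = \frac{x}{1 - T^{(k-1)}(x)}, \qquad T^{(0)}(x) = x.
\]

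Unwinding three times gives $T^{(1)}(x) = x/(1-x)$, then $T^{(2)}(x) = x(1-x)/(1-2x)$, and finally
\[
  T^{(3)}(x) = \frac{x(1-2x)}{1 - 3x + x^2}.
\]
Clearing denominators yields $(1 - 3x + x^2)\, T^{(3)}(x) = x - 2x^2$, and reading off coefficients gives $t^{(3)}_1 = 1$, $t^{(3)}_2 = 1$, and the linear recurrence $t^{(3)}_m = 3 t^{(3)}_{m-1} - t^{(3)}_{m-2}$ for $m \geq 3$. On the Fibonacci side, applying $F_{j+2} = F_{j+1} + F_j$ twice produces the identity $F_{j+4} = 3F_{j+2} - F_j$, so $u_m := F_{2m-3}$ satisfies the same two-term recurrence; matching $u_2 = F_1 = 1 = t^{(3)}_2$ and $u_3 = F_3 = 2 = t^{(3)}_3$ then concludes, giving $t^{(3)}_m = F_{2m-3}$ for all $m \geq 2$, i.e.\ $|\mT^{(3)}_n| = F_{2n-5}$ for all $n \geq 3$.

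The main conceptual point is the observation that the odd-$k$ and even-$k$ clauses of the definition produce exactly the same generating-function recursion $T^{(k)} = x/(1 - T^{(k-1)})$, since the combinatorial decomposition is the same up to a left/right mirror. Once that is recognized the rest is elementary algebra, so I do not foresee a significant obstacle; one could alternatively avoid generating functions entirely by a direct bijective argument that splits the outermost $2$-parenthesization $\pi_1$ of size $1$ or $\geq 2$ to get the recurrence $t^{(3)}_m = 3 t^{(3)}_{m-1} - t^{(3)}_{m-2}$ in two combinatorial steps, but the analytic route above is the cleanest.
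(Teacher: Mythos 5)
Your proof is correct and follows essentially the same route as the paper: you derive the same generating-function recursion $T^{(k)} = x/(1 - T^{(k-1)})$ from the uniqueness of the outer decomposition (parity being irrelevant to the count), and compute $T^{(3)}(x) = (x-2x^2)/(1-3x+x^2)$ exactly as the paper does. The only difference is the last step: the paper expands $T^{(3)}$ by partial fractions over $\varphi$ to get a Binet-type closed form, while you instead read off the linear recurrence $t^{(3)}_m = 3t^{(3)}_{m-1} - t^{(3)}_{m-2}$ and match it against $F_{j+4} = 3F_{j+2} - F_j$ with the initial values; both finishes are elementary, and yours avoids introducing the golden ratio at all.
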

\begin{proof}
Let $f_k(t) = t + \sum_{n=3}^\infty \abs{\mT^{(k)}_n} t^{n-1}$ be the generating function where the $t^s$-coefficient is the number of $k$-parenthesizations on $s$ given symbols. Note that $f_0(t) = t$. For $k \geq 1$, note that the presentations of a $k$-parenthesization as  $\pi_1 (\pi_2 \cdots (\pi_{\ell-1}(\pi_{\ell} \sigma)) \cdots )$ or  $(\cdots ((\sigma \pi_1) \pi_2) \cdots \pi_{\ell-1})\pi_\ell$ are unique, so $k$-parenthesizations on $m$ symbols are in bijection with ordered collections of $(k-1)$-parenthesizations with a total of $m-1$ symbols. It follows that
\[f_k(t) = t(1 + f_{k-1}(t) + f_{k-1}(t)^2 + \cdots) = \frac{t}{1 - f_{k-1}(t)}.\]
We may now compute
\[f_1(t) = \frac{t}{1-t}, \quad f_2(t) = \frac{t-t^2}{1-2t}, \quad f_3(t) = \frac{t-2t^2}{1-3t+t^2}.\]
Letting $\varphi = \frac{\sqrt{5}+1}{2}$, one can verify
\[f_3(t) = \frac{\varphi^{-3}/\sqrt{5}}{1 - \varphi^2 t} + \frac{\varphi^{3}/\sqrt{5}}{1 - \varphi^{-2} t} - 2,\]
so
\[
\abs{T^{(3)}_n} = \frac{\varphi^{-3} (\varphi^2)^{n-1} + \varphi^3(\varphi^{-2})^{n-1}}{\sqrt{5}} = \frac{\varphi^{2n-5} - (-\varphi)^{-(2n-5)}}{\sqrt{5}} = F_{2n-5}. \qedhere
\]
\end{proof}

\begin{proof}[Proof of \cref{thm:stab}]
This follows from \cref{lem:swap}, \cref{lem:s3}, and \cref{prop:count}.
\end{proof}

\section{Remarks on Higher-Dimensional Generalizations} \label{sec:hidim}
It is natural to ask whether \cref{thm:main} generalizes to higher-dimensional polytopes and their triangulations, i.e.\ divisions into simplices. However, we will see that there are a number of obstructions to doing so, and in the one case where our techniques easily generalize the results obtained have much more elementary proofs.

\subsection{Higher secondary polytopes}
The secondary polytope $\Sigma_Q$ of a polygon $Q$ was a key ingredient in the proof of \cref{thm:main}, so for $d$-dimensional $Q$ it is natural to look at $\Sigma_Q$ to find the correct generalization of \cref{thm:main}. In this section, we will briefly review some facts about $\Sigma_Q$; a more comprehensive treatment can be found in \cite{GKZ} or \cite{MR2743368}.

A \vocab{subdivision} of $Q$ is a collection $S$ of $d$-dimensional convex polytopes such that $\bigcup_{P \in S} P = Q$, $V(P) \subseteq V(Q)$ for all $P \in S$ and any two $P, P' \in S$ intersect along a common face. We say a subdivision $S'$ \vocab{refines} a subdivision $S$ if every element of $S'$ is contained in an element of $S$. A \vocab{triangulation} is a subdivision consisting of simplices.

In the case $d=2$, the subdivisions of $Q$ under refinement form a lattice which is isomorphic to the face lattice of $\Sigma_Q$. For $d > 2$, the same is true, as long as we restrict ourselves to subdivisions that are \vocab{regular}: those that can be obtained by projecting the upper facets of $\conv\set{(p, \psi(p)) \in \mathbb{R}^{d+1} \mid p \in V(Q)}$ down to $\mathbb{R}^d$ for some $\psi \colon V(Q) \to \mathbb{R}$. In particular, the vertices of $\Sigma_Q$ correspond to regular triangulations. The facets of $\Sigma_Q$ correspond to regular subdivisions $S$ that are maximally coarse, in the sense that they are not the trivial subdivision $\set{Q}$ and that there is no nontrivial regular subdivision $S' \neq S$ such that $S$ refines $S'$. A vertex corresponding to a triangulation $T$ is contained in a facet corresponding to a subdivision $S$ if and only if $T$ refines $S$. As a result, $\KG(\Sigma_Q)$ can be defined as the graph where the vertices are regular triangulations of $Q$ and two vertices are adjacent if their corresponding triangulations share a common nontrivial regular refinement.

One major issue that arises is unlike the $2$-dimensional case, where every coarsest subdivision is given by division by a single diagonal, the coarsest subdivisions of an arbitrary polytope $Q$ do not admit a simple description. As a result, it appears difficult to generalize the techniques of \cref{sec:mainproof}. However, one might hope that for some special polytopes $Q$, all coarsest subdivisions are given by splitting $Q$ along a hyperplane. Such polytopes are called \vocab{totally splittable} and have been classified by Herrman and Joswig \cite{MR2639822}.

Although one might hope to now prove \cref{thm:main} for regular triangulations of all totally splittable polytopes, there is a further issue that arises. In two dimensions, two diagonals have a common refinement if and only if they don't cross (inside $Q$). However, this is no longer the case in higher dimensions, with a counterexample given by the regular crosspolytopes \cite[Ex.~3]{MR2639822}. The proofs of \cref{lem:vec,lem:circ} crucially rely on the noncrossing of diagonals, and thus seem unlikely to generalize. Indeed, even the result one might hope to prove is false. For example, in the case of the crosspolytope
\[Q = \conv \setmid{\pm e_i \in \setr^d}{i \in [d]},\]
the secondary polytope has dimension $d-1$, so it is reasonable to expect that $\chi(\KG(\Sigma_Q)) \geq d$. However, the regular triangulations are given by splitting along a choice of $d-1$ of the $d$ coordinate hyperplanes in $\setr^d$ (see \cite[Ex.~3]{MR2639822}). In particular, if $d \geq 3$, any two regular triangulations have a common split, so $\KG(\Sigma_Q)$ has no edges and thus has chromatic number $1$.

Hermann and Joswig showed that the totally splittable polytopes can be described as \vocab{joins} of simplices, (convex) polygons, prisms over simplices, and crosspolytopes. If we impose the further condition that all regular triangulations be cut out by noncrossing hyperplanes, our possibilities dwindle further. One can show that we cannot have a crosspolytope in $Q$, and that a join of multiple polytopes that are polygons or prisms over simplices is also forbidden. Joining a simplex does not affect $\Sigma_Q$, so the only higher-dimensional case remaining is when $Q$ is a prism over a simplex, which we now proceed to discuss.

Before we proceed, however, it is worth highlighting that, while our method essentially prevents us from considering joins of polytopes, taking joins does have a simple interpretation for the Kneser graph combinatorially, namely that we are taking the \emph{tensor product} of the respective Kneser graphs. Indeed, from \cite[Cor.~4.2.8]{MR2743368} one can infer that $\Sigma_{P \ast Q} \cong \Sigma_P \times \Sigma_Q$, where $P \ast Q$ denotes the join of convex polytopes $P,Q$, giving us $\KG(\Sigma_{P \ast Q}) = \KG(\Sigma_P) \times \KG(\Sigma_Q)$. 

\subsection{The permutohedron}
Let $Q = \Delta^1 \times \Delta^{n-1}$ be the prism over a $(d-1)$-dimensional simplex, where $\Delta^d$ denotes a $d$-dimensional simplex. We label the vertices of $Q$ with $1,2,\ldots,n,1',2',\ldots,n'$, where $1,2,\ldots,n$ form a simplex, $1',2',\ldots,n'$ form a simplex, and $ii'$ is an edge for all $i \in [n]$.

Any triangulation $T$ of $Q$ is regular and can be constructed in a bottom-up manner as follows:
\begin{itemize}
    \item Let $V_0 = [n]$.
    \item For $i = 1,2,\ldots,n$ in order, pick some vertex $i \in V_{i-1} \cap [n]$. Add the simplex with vertices given by $V_{i-1} \cup \set{i'}$ to the triangulation and let $V_i = (V_{i-1} \setminus \set{i}) \cup \set{i'}$.
\end{itemize}
In particular, each triangulation of $Q$ corresponds to a permutation of $[n]$ given by the order in which the $i$ are removed from $V_0$. In fact, $\Sigma_Q$ is the \vocab{permutohedron} $\Perm_{n-1}$, the polytope that is the convex hull of the permutations of $[n]$, when considered as points in $\setr^n$ \cite[Sect.~7.3.C]{GKZ}.
This is a $(n-1)$-dimensional polytope in the hyperplane $\setmid{(x_1,\ldots,x_n) \in \setr^{n}}{x_1+\cdots+x_n = \binom{n+1}{2}}$ (hence the notation $\Perm_{n-1}$).

Interestingly, both of our proofs of \cref{thm:main} generalize to show that $\chi(\KG(\Perm_{n-1})) \geq n$. To generalize \cref{lem:vec}, we may define $X_Q$ and $X_Q^*$ similarly to \cref{sec:mainproof} and construct, for each hyperplane $h$ splitting $Q$, an element $x_h \in X_Q$ that has positive coefficients on the points on $h$ and negative coefficients on the points not in $h$. It follows that if $h$ and $h'$ are two noncrossing splittings and $f_h$ is the facet corresponding to $h$, then $\gen{f_h^\perp, x_{h'}} > 0$. Alternatively, one can translate $\Perm_{n-1}$ to be a polytope in the space $\setmid{(x_1,\ldots,x_n) \in \setr^{n}}{x_1+\cdots+x_n = 0}$ and construct a bilinear form satisfing \cref{item:obtuse} of \cref{lem:polybu}, generalizing \cref{lem:circ}. In fact, this ends up being even simpler than \cref{lem:circ}, as we don't need to construct a bespoke inner product and can instead use the natural inner product on $\setr^n$.

The facets of $\Perm_{n-1}$ correspond to nonempty proper subsets $\emptyset \neq S \subsetneq [n]$, where the vertices $\sigma$ contained in a facet $f_S$ are those such that $\sigma(S) = [\abs{S}]$. Thus, two permutations $\sigma, \tau$ are connected in $\KG(\Perm_{n-1})$ if and only if there is no $k \in [n-1]$ such that $\sigma\inv([k]) = \tau\inv([k])$. As a result, coloring a permutation $\sigma$ by $\sigma\inv(1)$ is a proper $n$-coloring of $\KG(\Perm_{n-1})$, so $\chi(\KG(\Perm_{n-1})) = n$.

While this sounds interesting, it turns out there is a much simpler reason why $\chi(\KG(\Perm_{n-1})) \geq n$: there exists a clique of size $n$ in $\KG(\Perm_{n-1})$. Specifically, let $\rho$ be the permutation of $[n]$ given by $\rho(i) \equiv i+1 \pmod{n}$. If $H$ is the subgroup generated by $\rho$, we claim that the right cosets of $H$ are cliques in $\KG(\Perm_{n-1})$. Indeed, suppose there exists a permutation $\sigma$ and some $i \in [n-1]$ such that $\sigma$ and $\rho^i\sigma$ are not adjacent in $\KG(\Perm_{n-1})$. Then there exist some $k\in [n-1]$ such that $\sigma\inv([k]) = \sigma\inv(\rho^{-i}([k]))$, implying that $[k] = \rho^i([k])$, which is impossible.

The fact that $\KG(\Perm_{n-1})$ contains $(n-1)!$ disjoint copies of $K_n$ also answers a number of other questions about it. For instance, it follows immediately that the independence number is at most $(n-1)!$, and since an $n$-coloring exists the independence number is exactly $(n-1)!$. Moreover, each of these cliques is a (very) small color-critical subgraph of $\KG(\Perm_{n-1})$. Although this does not rule out the existence of other interesting color-critical subgraphs, it makes them perhaps less interesting to find.

\section{Concluding Remarks and Open Problems} \label{sec:final}
\subsection{Independence number of \texorpdfstring{\boldmath $\KG(\mT_n)$}{KG(T\_n)}} 
The work in this paper was initially motivated by the problem of determining the independence number $\alpha(\KG(\mT_n))$, i.e.\ the size of the largest intersecting family of triangulations in $\mT_n$. It is a conjecture of Kalai and Meagher (see \cite{KalaiMO}) that the best one can do is to take all triangulations containing, say, $\set{1,3}$, which are in bijection with triangulations of a convex $(n-1)$-gon. 
\begin{conj}[Kalai-Meagher] \label{Kalaiconj} For every $n \geq 3$,
\[\alpha(\KG(\mT_n))=\abs{\mT_{n-1}}=C_{n-3},\]
where $C_k = \frac{1}{k+1}\binom{2k}{k}$ denotes the $k$th Catalan number.
\end{conj}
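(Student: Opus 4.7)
I would attack the upper bound via a shifting/compression argument in the spirit of the classical Erdős-Ko-Rado proof. The matching lower bound $\alpha(\KG(\mT_n)) \geq C_{n-3}$ is immediate, since the family of all triangulations containing a fixed ear diagonal $\set{1,3}$ has size $\abs{\mT_{n-1}} = C_{n-3}$ and is intersecting. My plan would be to show that every intersecting family $\mathcal{A} \subseteq \mT_n$ can be transformed, through a sequence of size-preserving operations, into an intersecting family in which all triangulations contain a common diagonal, at which point the bound follows by recursion.

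The compression operator would act along flips in the associahedron. Fix an ordering on $\Diag_n$ reflecting proximity to the ear diagonal $\set{1,3}$. Given $\mathcal{A}$, a diagonal $d$, and a flip of $d$ inside some $T \in \mathcal{A}$ producing a triangulation $T'$ with $T \setminus \set{d} \cup \set{d'}$ smaller in the ordering, replace $T$ by $T'$ whenever $T' \notin \mathcal{A}$. One would then prove in order: (i) the shift preserves the intersecting property; (ii) after finitely many shifts the family becomes flip-compressed; (iii) a flip-compressed intersecting family must be contained in the collection of triangulations sharing a single fixed diagonal. Step (iii) would close via induction on $n$ by removing an ear vertex from the $n$-gon and applying the inductive hypothesis on the resulting $(n-1)$-gon.

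The main obstacle is step (i): two triangulations in $\mathcal{A}$ may share only the single diagonal $d$, in which case flipping $d$ in one of them destroys their intersection. In classical EKR one can work element-wise because $k$-subsets can share many elements, so a swap always leaves room for intersection; here, the shift must be designed so that either the resulting pair of triangulations still shares something, or the ``only shared diagonal'' case is ruled out by the compression ordering. Identifying the right local move—perhaps a simultaneous multi-diagonal reconfiguration rather than a single flip—is, in my view, the crux of the problem. As a parallel attack, I would pursue a spectral approach: $\KG(\mT_n)$ carries a large dihedral symmetry and the Catalan combinatorics of $\mT_n$ might make the least eigenvalue of (a weighted version of) its adjacency matrix amenable to closed-form computation, after which a Hoffman-Delsarte style ratio bound would give the conjectured value directly. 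Numerical computation of these eigenvalues for small $n$, and matching them against $\abs{\mT_n}/C_{n-3}$, would be the natural first step.
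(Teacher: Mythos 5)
This statement is not proved in the paper; it is explicitly presented as an \emph{open conjecture} due to Kalai and Meagher, and the paper offers no proof (only the easy lower bound construction, which you also give). So there is no ``paper's own proof'' to compare against, and your task was to notice that you are being asked to prove something that remains open.

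Your proposal is therefore a research program rather than a proof, and by your own admission it has a genuine gap at step (i): a flip on the unique shared diagonal between two triangulations in $\mathcal{A}$ can destroy the intersecting property, and you do not resolve this. This is not a small technical issue — it is the heart of why the problem is hard. Unlike $k$-subsets of $[n]$, two triangulations can intersect in a single diagonal with no redundancy, so the classical compression calculus has nothing to fall back on. Your step (iii) is also far from routine: even granting a ``flip-compressed'' family, there is no argument given that such a family is contained in a star $\setmid{T}{d_0 \in T}$, and the natural induction by ear deletion does not obviously preserve compression. The spectral/ratio-bound alternative is a reasonable heuristic but is also speculative; $\KG(\mT_n)$ is not vertex-transitive (the automorphism group is only dihedral of order $2n$, acting on a set of size $C_{n-2}$), so the Hoffman--Delsarte bound does not apply directly and would require a weighted adjacency matrix respecting a suitable association-scheme-like structure, which you would first have to construct. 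In short: the lower bound $\alpha(\KG(\mT_n)) \geq C_{n-3}$ is correct and trivial, but the upper bound remains open, and your sketch does not close it.
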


\subsection{Triangulation structure}
\label{subsec:struct}
In our view, the statement of \cref{lem:s3}, especially when combined with \cref{rmk:nobetter}, is unexpectedly elegant, and could be a sign that the $\rsqa$ relation operates on some hidden structure within triangulations. Another result possibly hinting at deeper phenomena is the fact that star triangulations and $\set{\Delta, \Delta'}$, the very same triangulations that appear in \cref{lem:swapmost}, have additional natural properties in $\KG(\mT_n)$.

\begin{prop}
A triangulation $T \in \mT_n$ is contained in a triangle of $\KG(\mT_n)$ if and only if $T$ is a neither a star, $\Delta$, nor $\Delta'$.
\label{prop:K3inKG}
\end{prop}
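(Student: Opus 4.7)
The plan is to prove the two directions of the biconditional separately.

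For the direction showing stars, $\Delta$, and $\Delta'$ are not in any triangle: if $T$ is a star at a vertex $v$, the argument from the proof of \cref{lem:starconstruct} shows that any triangulation disjoint from $T$ must contain the mini-diagonal $\{v-1, v+1\}$, so any two such triangulations share this diagonal. For $T = \Delta \in \mT_6$ (and $T = \Delta'$ by rotational symmetry), direct enumeration shows that the triangulations of $\mT_6$ disjoint from $\Delta$ are precisely $\Delta'$ together with the three stars centered at the even vertices $2, 4, 6$. Each of these four triangulations uses at least two of the three diagonals $\{2,4\}, \{4,6\}, \{2,6\}$ comprising $\Delta'$, so by pigeonhole any two share at least one such diagonal.

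For the direction showing every non-star, non-$\Delta/\Delta'$ triangulation lies in a triangle, I would induct on $n$. The cases $n \leq 5$ are vacuous as every triangulation is a star. For $n = 6$, the six remaining triangulations split under cyclic rotation into two orbits of size three; each orbit is itself a triangle in $\KG(\mT_6)$---realizing one of the two ``zigzag partitions'' of the nine hexagon diagonals into pairwise disjoint triangulations---which is verified directly. For the inductive step $n \geq 7$, choose an ear $\{i-1, i, i+1\}$ of $T$ and let $T^-\in \mT_{n-1}$ denote the reduction $T\setminus\{\{i-1,i+1\}\}$. Provided $T^-$ is non-star (and not $\Delta$ or $\Delta'$ when $n=7$), the inductive hypothesis provides a triangle $\{T^-, U^-, V^-\}$ in $\KG(\mT_{n-1})$. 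Let $\{i-1, i+1, k\}$ and $\{i-1, i+1, k'\}$ be the unique triangles of $U^-$ and $V^-$ incident to the side $\{i-1, i+1\}$ of the $(n-1)$-gon, and define $U = U^- \cup \{\{i, k\}\}$ and $V = V^- \cup \{\{i, k'\}\}$ in $\mT_n$. Since $T$ contains no diagonal through $i$ and $\{i-1, i+1\} \notin U^- \cup V^-$, one has $T \cap U = T \cap V = \emptyset$, and $U \cap V = \emptyset$ as long as $k \neq k'$.

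The main obstacle is handling the corner cases where the induction stalls. If some ear-removal of $T$ yields a star (or, when $n=7$, yields $\Delta$ or $\Delta'$), one shows that $T$ must have the restricted ``star-plus-flip'' form: a star at some vertex $v$ with exactly one diagonal $\{v, w\}$ replaced by the mini-diagonal $\{w-1, w+1\}$. A short case analysis over the position of $w$ relative to $v$ then shows that some other ear-removal of $T$ yields a genuinely non-star, non-$\Delta/\Delta'$ triangulation, so the induction proceeds after switching ears. If instead the only triangle $\{T^-, U^-, V^-\}$ available in $\KG(\mT_{n-1})$ forces $k = k'$, one either switches to a different ear of $T$ or performs a local flip inside $U^-$ (or $V^-$) to alter the value of $k$ while preserving disjointness from $T^-$ and from the other triangulation; I expect a similarly short finite check to resolve all remaining configurations.
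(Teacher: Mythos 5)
Your inductive strategy --- remove an ear $\{i-1,i,i+1\}$, apply the hypothesis to $T^-$, and lift the resulting triangle $\{T^-,U^-,V^-\}$ to $\mT_n$ by inserting $\{i,k\}$ and $\{i,k'\}$ --- is essentially the paper's approach, and the easy direction matches as well. But your concern that $k=k'$ might occur is unfounded, and noticing this is precisely what makes the inductive step close: if $k=k'$, then $\{i-1,i+1,k\}$ is a triangle of both $U^-$ and $V^-$, and since $\{i-1,i+1\}$ is a \emph{side} of the $(n-1)$-gon, at least one of $\{i-1,k\}$, $\{i+1,k\}$ must be a genuine diagonal once $n-1 \geq 5$ (they cannot both be sides, as that would force $k=i-2=i+2$); this common diagonal contradicts $U^- \cap V^- = \emptyset$. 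Leaving that step to an unverified finite check or a ``local flip'' patch is the real gap here, not a corner case to be handled later.

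The other obstruction --- that every ear removal might yield a star or $\Delta/\Delta'$ --- is genuine, and your proposed fix (characterize $T$ as a star-plus-flip and switch ears) would need to be carried out carefully, including the $n=7$ subcases where $T^-$ could be $\Delta$ or $\Delta'$. The paper avoids this entirely by inducting on the property of containing a copy of the six-vertex pattern $Z$, which by \cref{lem:swapmost} holds for exactly the triangulations that are neither a star nor $\Delta$ nor $\Delta'$. For $n \geq 7$ some ear of $T$ lies at a vertex outside the six defining the $Z$-copy, so removing that ear preserves the copy and keeps the inductive hypothesis applicable without any case split. Adopting that reformulation would replace your open-ended case analysis with a short structural observation.
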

\begin{proof}
For the ``if'' direction, note that if $T$ is a star centered at $i$, then any triangulation disjoint from $T$ must contain $\set{i-1, i+1}$. If $T =\Delta$, then any triangulation disjoint from $T$ needs to contain at least two diagonals in $\Delta'$. Similarly if $T =\Delta'$, then any triangulation disjoint from $T$ needs to contain at least two diagonals in $\Delta$. In any case, any two triangulations disjoint from $T$ must intersect.

For the ``only if'' direction, note that the proof of \cref{lem:swapmost} shows that there exists a copy of the triangulation $Z$ in $T$; we show by induction on $n$ that any such triangulation is contained in a triangle of $\KG(\mT_n)$. If $n = 6$, then such a triangle is formed by $T$ and its two rotations.

If $n \geq 7$, rotate $T$ so that $\set{n-1,1} \in T$ and deleting vertex $n$ from $T$ yields a triangulation $T' \in \mT_{n-1}$ containing a $Z$. By the inductive hypothesis, we know that there are two triangulations $T'_1,T'_2 \in \mT_{n-1}$ such that $T'_1$, $T'_2$, and $T'$ are pairwise disjoint. Now, if $i_1$ and $i_2$ are such that $\set{i_1, n-1, 1}$ and $\set{i_2, n-1, 1}$ are triangles in $T'_1$ and $T'_2$, respectively, let $T_1 = T'_1 \cup \set{\set{i_1, n}} \in \mT_n$ and $T_2 = T'_2 \cup \set{\set{i_2, n}} \in \mT_n$. We claim that $T$, $T_1$, and $T_2$ are pairwise disjoint. Since $T$, $T_1$, and $T_2$ are obtained from $T'$, $T'_1$, and $T'_2$ by adding $\set{n-1,1}$, $\set{i_1, n}$, and $\set{i_2, n}$, respectively, the only way for them to intersect is if $i_1 = i_2$. However, this contradicts the fact that $T'_1$ and $T'_2$ are disjoint.
\end{proof}

\subsection{Kneser hypergraphs} For any integer $r \geq 2$ and an arbitrary set system $\calf$, let $\KG^{r}(\calf)$ be the \vocab{Kneser $r$-uniform hypergraph} whose vertices are the elements of $\calf$ and whose edges are the collections of $r$ pairwise disjoint sets in $\calf$. In \cite{alon1986chromatic}, Alon, Frankl and Lov\'asz generalized the Lov\'asz-Kneser theorem, showing that
\[\chi\paren*{\KG^{r}\paren*{\binom{[n]}{k}}} = \ceil*{ \frac{n - r(k-1)}{r-1}}\]
for any integers $k \geq 1$ and $r \geq 2$ with $n \geq rk$. Generalizing both this fact and \cref{Dol}, Kříž \cite{kriz1992} later showed that for any set system $\calf$,
\[\chi(\KG^{r}(\calf)) \geq \ceil*{ \frac{1}{r-1} \cd_r(\calf)}.\]
Here $\cd_{r}(\calf)$ denotes the \vocab{$r$-colorability defect} of $\calf$, the minimum size of a set $S'$ such that the elements of $\calf$ disjoint from $S'$ form an $r$-colorable family. Subsequently, stability versions of these results and further generalizations and refinements have been studied \cite{ziegler2002generalized,alon2009stable,frick2020chromatic}. It is therefore a natural direction to next investigate the appropriate hypergraph generalizations of \cref{thm:main}, \cref{thm:stab}, and \cref{thm:star}.

For every $r \geq 2$, similar arguments to those in \cref{sec:intro} show that $\chi(\KG^{r}(\mT_n)) \leq \ceil*{ {(n-2)}/({r-1})}$. Surprisingly, this is not optimal for $r \geq 3$. Indeed, by \cref{prop:K3inKG}, no star is contained in an edge of $\KG^{r}(\mT_n)$, so one can obtain 
\[\chi(\KG^{r}(\mT_n)) \leq \ceil*{ \frac{n-3}{r-1} }\]
by extending \cref{lem:starconstruct}. 
Although equality does not hold for all $n \geq 3$, this suggests the following conjecture:
\begin{conj}
For any $r \geq 2$, there exists some $n_0(r)$ such that for all $n \geq n_0(r)$ we have
\[\chi(\KG^{r}(\mT_n)) = \ceil*{ \frac{n-r}{r-1} }.\]
\end{conj}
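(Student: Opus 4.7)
The plan is to mirror the proof of \cref{thm:main}, generalizing each ingredient to the hypergraph setting: a refined combinatorial upper bound, and a matching topological lower bound via a hypergraph analog of \cref{lem:polybu} applied to the secondary polytope $\Sigma_Q$.

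For the upper bound, the existing bound $\ceil*{(n-3)/(r-1)}$ already matches the conjecture for $r \in \set{2,3}$ and for many residues of $n \pmod{r-1}$ when $r \geq 4$, so only occasional improvements of $1$ are needed. The strategy extends \cref{lem:starconstruct}: since \cref{prop:K3inKG} shows that stars cannot appear in any hyperedge of $\KG^r(\mT_n)$ for $r \geq 3$, all stars can be absorbed into a single color. For $r \geq 4$ the plan is to identify further families of rigid triangulations (generalizing $\Delta$ and $\Delta'$) which, together with the stars, occupy at most $r-1$ color classes in the base coloring by ``ear'' triangles $\set{i,i+1,i+2}$, allowing those classes to be merged into one.

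For the lower bound, we propose extending \cref{subsec:KG(poly)} to hypergraphs. Define $\KG^r(P)$ for a $d$-dimensional polytope $P$ by declaring an $r$-tuple of vertices to form a hyperedge if and only if their facet sets are pairwise disjoint. We would then aim to prove that if $P$ satisfies the analog of \cref{item:gale} of \cref{lem:polybu}, then $\chi(\KG^r(P)) \geq \ceil*{(d-r+3)/(r-1)}$. The topological input shifts from the Lyusternik-Schnirelmann theorem to a $\mathbb{Z}_r$-equivariant analog such as Dold's theorem, paralleling how Alon, Frankl, and Lov\'asz \cite{alon1986chromatic} generalized B\'ar\'any's argument. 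Once such a hypergraph analog of \cref{lem:polybu} is in place, applying it to $\Sigma_Q$ with the vectors $x_d$ of \cref{lem:vec} or \cref{lem:circ} would immediately yield $\chi(\KG^r(\mT_n)) \geq \ceil*{(n-r)/(r-1)}$ for $n \geq n_0(r)$.

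The main obstacle is the $\mathbb{Z}_r$-equivariant topological step. For $r = 2$, Lyusternik-Schnirelmann produces antipodal $\pm w \in S^{d-1}$, directly giving two same-color vertices with disjoint facet sets. For $r \geq 3$ one instead needs an orbit of a free $\mathbb{Z}_r$-action that is monochromatic under the coloring, together with a geometric mechanism forcing the associated facet sets to be pairwise disjoint. Carrying this out likely requires a carefully chosen $\mathbb{Z}_r$-space (such as a suitable join rather than a sphere), and verifying that the geometry of $\Sigma_Q$---which already required a bespoke bilinear form in \cref{lem:circ}---supports the resulting equivariant covering argument. The hypothesis $n \geq n_0(r)$ in the conjecture likely reflects both the sporadic corrections needed in the upper bound and the requirement that $d = n-3$ be large enough to avoid low-dimensional degeneracies in the equivariant topology.
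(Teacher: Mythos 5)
This statement is an open \emph{conjecture} in the paper, not a proven result, so there is no ``paper proof'' to compare against; what you have written is a research plan, and it must be assessed as such. Your plan does align with the spirit of the paper's own suggestions (the authors explicitly derive the upper bound $\ceil*{(n-3)/(r-1)}$ from \cref{prop:K3inKG}, note the analogy with \cite{alon1986chromatic,kriz1992}, and remark that Kříž's theorem gives only trivial bounds, implicitly pointing toward a geometric/equivariant argument as the likely route). You correctly diagnose that the star argument settles the upper bound for $r\in\set{2,3}$ and that for $r\geq 4$ the gap between $\ceil*{(n-3)/(r-1)}$ and $\ceil*{(n-r)/(r-1)}$ is $0$ or $1$ depending on the residue of $n$ modulo $r-1$, and your dimension count $\ceil*{((n-3)-r+3)/(r-1)}=\ceil*{(n-r)/(r-1)}$ is consistent.

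However, there are genuine gaps that prevent this from being a proof. On the upper bound, ``identify further families of rigid triangulations'' is a goal, not an argument: you give no candidate family beyond the stars, no mechanism by which $r-1$ residual ``ear'' classes can be merged into one, and no verification that such a merged class is actually independent in $\KG^r(\mT_n)$. The case $r=4$, where a single additional merge would suffice for roughly two-thirds of residues, would be the place to make this concrete, and you do not. On the lower bound, the jump from ``formulate a $\mathbb{Z}_r$-equivariant analog of \cref{lem:polybu}'' to ``applying it to $\Sigma_Q$ would immediately yield the bound'' hides the hard part. For $r=2$ the argument works because a proper coloring of $\KG(P)$ induces an open cover of $S^{d-1}$ to which Lyusternik-Schnirelmann applies directly. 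For $r\geq 3$ the natural replacement is not a cover of $S^{d-1}$ but of some $(r-1)$-fold join or deleted join carrying a free $\mathbb{Z}_r$-action, and the resulting index/connectivity bookkeeping is precisely what produces the divisor $r-1$; you do not specify the space, the action, the covering induced by a coloring, or why $r$ points in a single orbit would be forced to correspond to triangulations with pairwise disjoint facet sets. Moreover, \cref{item:gale} as stated is a condition about single open hemispheres, and it is not clear that it is strong enough to drive an $r$-fold equivariant argument; you would likely need a strengthened geometric hypothesis on $\Sigma_Q$ and then re-verify it, rather than re-using \cref{lem:vec} or \cref{lem:circ} unchanged. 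Until these steps are carried out, the proposal is a plausible program, not a proof.
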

It seems likely that any proof or disproof will in turn shed light on the structure of $\mT_n$ as hinted in \cref{subsec:struct}. Note that, as with \cref{Dol}, Kříž's theorem only gives trivial bounds for $\KG^{r} (\mT_n)$. 

\section*{Acknowledgments}
We thank Santiago Morales for fruitful discussions. C.P.\ was supported by NSF grant DMS-2246659. D.Z.\ was supported by the NSF Graduate
Research Fellowship Program (grant DGE-2039656).

\printbibliography
\end{document}